\newtheorem{theo}{Theorem}[section]
\newtheorem{theorem}[theo]{Theorem}
\newtheorem{proposition}[theo]{Proposition}
\newtheorem{lemma}[theo]{Lemma}
\newtheorem{definition}[theo]{Definition}
\newtheorem{remark}[theo]{Remark}
\newtheorem{assumption}[theo]{Assumption}
\newtheorem{corollary}[theo]{Corollary}
\begin{document}
\runningtitle{AsyncQVI for Discounted Markov Decision Processes with Near-Optimal Sample Complexity}
%

%

\twocolumn[

\aistatstitle{AsyncQVI: Asynchronous-Parallel Q-Value Iteration for Discounted Markov Decision Processes with Near-Optimal Sample Complexity}

\aistatsauthor{ Yibo Zeng \And Fei Feng \And Wotao Yin }

\aistatsaddress{Fudan University\\Columbia University \And University of California, \\Los Angeles \And University of California, \\Los Angeles}



]

\begin{abstract}
  In this paper, we propose AsyncQVI, an asynchronous-parallel Q-value iteration for discounted Markov decision processes whose transition and reward can only be sampled through a generative model. Given such a problem with $|\mathcal{S}|$ states, $|\mathcal{A}|$ actions, and a discounted factor $\gamma\in(0,1)$, AsyncQVI uses memory of size $\mathcal{O}(|\mathcal{S}|)$ and returns an $\varepsilon$-optimal policy with probability at least $1-\delta$ using $$\tilde{\mathcal{O}}\big(\frac{|\mathcal{S}||\mathcal{A}|}{(1-\gamma)^5\varepsilon^2}\log(\frac{1}{\delta})\big)$$  samples.\footnote{We use $\tilde{\mathcal{O}}$ to omit polylogarithmic factors, i.e., $\tilde{\mathcal{O}}(f)=\mathcal{O}(f\cdot (\log f)^{\mathcal{O}(1)})$.\label{tilde_O}} AsyncQVI is also the first asynchronous-parallel algorithm for discounted Markov decision processes that has a sample complexity, which nearly matches the theoretical lower bound. The relatively low memory footprint and parallel ability make AsyncQVI suitable for large-scale applications. In numerical tests, we compare AsyncQVI with four sample-based value iteration methods. The results show that our algorithm is highly efficient and achieves linear parallel speedup.
\end{abstract}

\section{Introduction}
Markov Decision Processes (MDPs) are a fundamental model to encapsulate sequential decision making under uncertainty. They have been indepthly studied and successfully applied to many fields, especially Reinforcement Learning (RL). As a rapidly developing area of artificial intelligence, RL is being flourishingly combined with deep neural network~\citep{Mnih2015Human,mnih2016asynchronous,li2017deep} and used in many domains including games~\citep{Mnih2015Human, silver2016mastering}, robotics~\citep{kober2013reinforcement}, natural language processing~\citep{young2018recent}, finance~\citep{deng2016deep}, healthcare~\citep{kosorok2015adaptive} and so on. With the advent of big-data applications, computational costs have increased significantly. Therefore, parallel computing techniques have been applied to reduce RL solving time~\citep{grounds2008parallel, nair2015massively}. 
Recently, asynchronous (async) parallel algorithms have been widely researched in RL and gained empirical success~\citep{mnih2016asynchronous, babaeizadeh2016reinforcement,  gu2017deep, stooke2018accelerated, zhang2019asynchronous}. Compared to synchronous (sync) parallel algorithms, where the agents must wait for the slowest agent to finish its task before they can all proceed to the next one, async-parallel algorithms allow agents to run continuously with little idling. Hence, async-parallel algorithms complete more tasks than their synchronous counterparts (though information delays and inconsistencies may negatively affect the task quality). Async-parallel algorithms have other advantages~\citep{bertsekas1991some}: the system is more tolerant of computing faults and communications glitches; it is also easy to incorporate new agents. 

In contrast to promising empirical results in async-parallel RL, its theoretical property has not been fully understood. In this paper, we are trying to mitigate the gap between theory and practice. Specifically, we will asynchronous-parallelly solve Discounted Infinite-Horizon Markov Decision Processes (DMDPs) which is not fully known in advance. A DMDP is described by a tuple $(\mathcal{S}, \mathcal{A}, \mathrm{P}, \mathrm{r}, \gamma)$, where $\mathcal{S}$ is a finite state space, $\mathcal{A}$ is a finite action space, $\mathrm{P}$ contains the transition probabilities, $\mathrm{r}$ is the collection of instant rewards, and $\gamma\in(0,1)$ is a discounted factor. At time step $t$, the controller or the decision maker observes a state $s_t \in \mathcal{S}$ and selects an action $a_t \in \mathcal{A}$ according to a policy $\pi$, where $\pi$ maps a state to an action. The action leads the environment to a next state $s_{t+1}$ with probability $p_{s_ts_{t+1}}^{a_t}$. Meanwhile, the controller receives an instant reward $r_{s_ts_{t+1}}^{a_t}$. Here, $r^{a_t}_{s_ts_{t+1}}$ is a deterministic value given the transitional instance $(s_t,a_t,s_{t+1})$. If only $s_t$ and $a_t$ are specified, $r^{a_t}_{s_t}$ is a random variable and $r^{a_t}_{s_t}=r^{a_t}_{s_ts_{t+1}}$ with probability $p^{a_t}_{s_ts_{t+1}}$. Given a policy $\pi: \mathcal{S} \rightarrow \mathcal{A}$,
we denote $\mathbf{v}^{\pi}\in\mathbb{R}^{|\mathcal{S}|}$ the state-value vector of $\pi$. Specifically,
\begin{equation*}
 \resizebox{1\hsize}{!}{$\mathbf{v}^{\pi} := [v^\pi_{1}, v^\pi_{2}, \cdots, v^\pi_{|\mathcal{S}|}]^\top,~v^{\pi}_i:= \mathbb{E}^\pi\big[\sum_{t=0}^\infty \gamma^{t} r_{s_t s_{t+1}}^{a_t}|s_0=i\big],$}
\end{equation*}
where the expectation 
is taken over the trajectory $(s_0,a_0,s_1,a_1,\ldots,s_t,a_t\dots)$ following $\pi$, i.e. $a_t=\pi_{s_t}$.
The objective is to seek for an optimal policy $\pi^*$ such that $\mathbf{v}^{\pi}$ is maximized component-wisely.

In our setting, P and r are unknown, which is also
the case for RL. Thus, an optimal policy cannot be obtained through dynamic programming approach but learned from transitional samples.
Depending on the applications, one can have access to either trajectories samples or a \emph{generative model}.
Specifically, given any state-action pair $(i,a)$, a {generative model} returns a next state $j$ with probability $p_{ij}^a$ and the instant reward $r_{ij}^a$. One can repeatedly call it with the same input $(i,a)$. Although the generative model is a stronger assumption than trajectories samples, it is natural and practical in many cases. Our algorithm must access a {generative model}; as a benefit, the algorithm requires only $\mathcal{O}(|\mathcal{S}|)$ memory and achieves a nearly optimal sample complexity.

We use notation $\mathbf{p}_{i}^{a}:=[p_{i1}^a, p_{i2}^a, \cdots,$ $p_{i|\mathcal{S}|}^a]^\top$ and $\bar{r}_i^a:=\sum_{j\in\mathcal{S}} p_{ij}^a r_{ij}^a$ and assume, without loss of generality, $r_{ij}^a\in [0,1]$, $\forall~ i,j\in\mathcal{S}, a\in\mathcal{A}$. We let $\mathbf{v}^*$ denote the optimal value vector associated with an optimal policy $\pi^*$. A policy $\pi$ is $\varepsilon$-optimal if  $\Vert \mathbf{v}^*-\mathbf{v}^{\pi} \Vert_\infty\leq \varepsilon$.

In this paper, we propose the algorithm Asynchronous-Parallel Q-Value Iteration (AsyncQVI), the first async-parallel RL algorithm that
has a sample complexity result. AsyncQVI returns an $\varepsilon$-optimal policy with probability at least $1-\delta$ using
$$
\tilde{O}\Big(\frac{|\mathcal{S}||\mathcal{A}|}{(1-\gamma)^5\varepsilon^2}\log(\frac{1}{\delta})\Big) $$
samples\textsuperscript{\ref{tilde_O}},
provided that each coordinate is updated at least once within $\mathcal{O}(|\mathcal{S}||\mathcal{A}|)$ time and the async delay is bounded by $\mathcal{O}(|\mathcal{S}||\mathcal{A}|)$. ~\citet{sidford2018near} established the lower bound on the sample complexity of any DMDP with a generative model as
$$
  \Omega\Big(\frac{|\mathcal{S}||\mathcal{A}|}{(1-\gamma)^3\varepsilon^2}
  \log(\frac{1}{\delta})\Big)
$$
for finding an $\varepsilon$-optimal policy $\pi$ with probability at least $1-\delta$. Therefore, our result nearly matches the lower bound up to $(1-\gamma)^2$ and logarithmic factors.
Besides, AsyncQVI requires only $\mathcal{O}(|\mathcal{S}|)$ memory, which is minimal possible (without using dimension reduction) to store $\pi$.

With a near-optimal sample complexity, the minimal memory requirement, and async-parallel implementation, AsyncQVI is a competitive RL algorithm.


\paragraph{Notation} We write a scalar in  \emph{italic} type, a vector or a matrix in \textbf{boldface}, and their components with subscripts. For example, $\mathbf{v}$ and $v_i$ are a vector and its $i$th component, respectively.

\section{Related Works}
AsyncQVI is not the first attempt to solve DMDP problems with asynchronous parallel. As early as in \citet{bertsekas1989parallel}, the authors proposed async-parallel dynamic programming methods. They established and analyzed fundamental asynchronous models, which are characterized by coordinate update and asynchronous delay. This seminal work inspires the later study of async-parallel algorithms for DMDPs that are not fully known beforehand and can only be accessed by samples.

\citet{Tsitsiklis1994} adapted Q-learning to async setting and provided the convergence guarantee. However, although several works have established sample complexity results for single-threaded cases \citep{kearns2002sparse, even2003learning, azar2011speedy, azar2013minimax, kalathil2014empirical, sidford2018near, sidford2018variance, agarwal2019optimality}, there have been no such results for async-parallel algorithms. Moreover, considering the latent huge cost of taking samples, an explicit complexity result is more and more concerned and is an important algorithm comparison reference.

One may notice that to achieve promising complexity results, several works adopt the \emph{generative model}, e.g.,~\citet{kearns2002sparse, azar2011speedy, azar2013minimax, kalathil2014empirical, sidford2018near, sidford2018variance}. This model is proposed by~\citet{kearns2002sparse}. It is indeed a sample oracle which takes any state-action pair $(i, a)$ as input and returns a next state $j$ with probability $p_{ij}^a$ and the corresponding instant reward $r_{ij}^a$. Our algorithm is also built under the generative model and we develop the first async-parallel algorithm that has an explicit sample complexity.

\begin{table*}
\caption{Related Async-parallel Methods For DMDPs.}\label{asyn_compare}
\vspace{-1em}
\begin{center}
\begin{tabular}{cccccc}
  \textbf{Algorithms}
  & \textbf{Assumption}
  & \textbf{Async Delay} & \makecell{\textbf{Sample} \\\textbf{Complexity}}
  & \textbf{Memory}
  & \textbf{References}\\
  \hline\\
  \makecell{Totally Async\\ QVI}  & Fully known DMDP
  & Unbounded\protect\footnotemark 
  & N/A &$\mathcal{O}(|\mathcal{S}||\mathcal{A}|)$
  &\makecell{Bertsekas and\\ Tsitsiklis (1989)}\\

  \makecell{Partially Async\\ QVI} & Fully known DMDP
  & Uniformly Bounded
  & N/A
  &$\mathcal{O}(|\mathcal{S}||\mathcal{A}|)$
  & \makecell{Bertsekas and\\ Tsitsiklis (1989)}\\

  Async Q-learning  & Trajectory samples
  & Unbounded\textsuperscript{\ref{unbounded_delay}}
  & $-$ 
  &$\mathcal{O}(|\mathcal{S}||\mathcal{A}|)$
  &\citet{Tsitsiklis1994}\\

  AsyncQVI  & Generative model
  & Uniformly Bounded & $\surd$ 
  &$\mathcal{O}(|\mathcal{S}|)$
  &This Paper \\
\end{tabular}
\end{center}
\end{table*}
\begin{table*}
\caption{Related Algorithms For DMDP With A Generative Model.}
\label{GM_compare}
\begin{center}
\begin{tabular}{ccccc}
  \textbf{Algorithms} & \textbf{Async} &\textbf{Sample Complexity}  & \textbf{Memory}  & \textbf{References}\\
  \hline\\
  Variance-Reduced VI
  &$\times$
  & $\tilde{O}\big(\frac{|\mathcal{S}||\mathcal{A}|}{(1-\gamma)^4\varepsilon^2}\log(\frac{1}{\delta})\big)$
  &$\mathcal{O}(|\mathcal{S}||\mathcal{A}|)$
  &\citet{sidford2018variance} \\

  Variance-Reduced QVI
  &$\times$
  & $\tilde{O}\big(\frac{|\mathcal{S}||\mathcal{A}|}{(1-\gamma)^3\varepsilon^2}\log(\frac{1}{\delta})\big)$
  &$\mathcal{O}(|\mathcal{S}||\mathcal{A}|)$
  &\citet{sidford2018near} \\

  AsyncQVI
  &$\surd$
  & $\tilde{O}\big(\frac{|\mathcal{S}||\mathcal{A}|}{(1-\gamma)^5\varepsilon^2}\log(\frac{1}{\delta})\big)$
  &$\mathcal{O}(|\mathcal{S}|)$
  &This Paper \\
\end{tabular}
\end{center}
\end{table*}

We list related async-parallel methods for DMDPs in Table~\ref{asyn_compare} and the generative model methods in Table~\ref{GM_compare}. Note that some papers~\citep{even2003learning, azar2011speedy, kalathil2014empirical} use the word ``asynchronous'' for single-threaded coordinate update methods. In constrast, our algorithm is not only multi-threaded, but also allows stale information and async delay.
Further, the lower sample complexities achieved by~\citet{sidford2018near, sidford2018variance} rely on the \emph{variance reduction} technique, which requires periodic synchronization and $\mathcal{O}(|\mathcal{S}||\mathcal{A}|)$ memory footprint to update and store a basis, say ${\mathbf{p}_i^a}^\top\mathbf{v}_0$, $\forall$ $i\in\mathcal{S}, a\in\mathcal{A}$. In order to take advantage of fully async-parallel structure and achieve the minimal memory complexity $\mathcal{O}(|\mathcal{S}|)$, we do not implement variance reduction and therefore, obtain a slightly higher sample complexity.



The last thing to mention is that there are some other nice async-parallel works about fixed point problems in a Hilbert space, e.g. \citet{peng2016arock,HannahYin2016_UnboundedDelay}, while our algorithm is based on a contraction with respect to the $\ell_\infty$ norm.
\footnotetext{Under the assumption: $\forall i,j$, $\lim_{t\rightarrow\infty}\tau_j^i(t)=\infty$ holds with probability 1.~\label{unbounded_delay}}
\section{Preliminaries}\label{preliminary}

In this section, we review several key results on Q-value iteration and async-parallel algorithms. 

\subsection{Q-value Iteration}\label{pre_Q}
Given a DMDP $(\mathcal{S},\mathcal{A},\mathrm{P},\mathrm{r},\gamma)$ and
a policy $\pi$, we define the action-value vector $\mathbf{Q}^{\pi}$ with entries
\begin{equation*}
    Q^{\pi}_{i,a} = \mathbb{E}^\pi[\sum_{t=0}^\infty \gamma^{t} r_{s_t s_{t+1}}^{a_t}\big\vert~ s_0 = i, a_0 = a].
\end{equation*}
For an optimal policy $\pi^*$, we let $\mathbf{Q}^*$ denote the corresponding optimal action-value vector.
From $\mathbf{Q}^*$, we can obtain $\forall~i\in \mathcal{S}$,
$\pi^*_i = \arg\max_{a} Q^{*}_{i,a}$, $v^{*}_{i} = \max_{a} Q^{*}_{i,a}$. Hence, to derive an optimal policy $\pi^*$, it suffices to compute the optimal action-value vector $\mathbf{Q}^*$. 
To reach this end, we first define an operator $T: \mathbb{R}^{|\mathcal{S}||\mathcal{A}|}\rightarrow \mathbb{R}^{|\mathcal{S}||\mathcal{A}|}$ as
\begin{equation}~\label{Q_value_operator}
    \resizebox{1\hsize}{!}{$[T\mathbf{Q}]_{i,a}
=   \underbrace{\sum_{j\in\mathcal{S}} p_{ij}^{a} r_{ij}^a}_{\text{expected instant reward}}
    +\underbrace{\gamma\sum_{j\in\mathcal{S}} p_{ij}^{a}\max_{a^\prime}{Q}_{j,a^\prime}}_{\text{expected discounted future reward}},$}
\end{equation}
where $\mathbf{Q}\in \mathbb{R}^{|\mathcal{S}||\mathcal{A}|}$ and $[T\mathbf{Q}]_{i,a}$ is the $((i-1)\times|\mathcal{A}|+a)$th component of $T\mathbf{Q}$ with $1\leq i\leq|\mathcal{S}|,1\leq a\leq |\mathcal{A}|$. Actually, $T$ is the well-known Bellman operator. It is an $\gamma-$contraction under $\ell_\infty$ norm and $\mathbf{Q}^*$ is the unique fixed point (see e.g.~\citep{puterman2014markov}). Therefore, one can apply fixed-point iterations of $T$ to recover $\mathbf{Q}^*$. Next, we introduce the async-parallel coordinate update fashion of fixed-point iterations.

\subsection{Asynchronous-Parallel Coordinate Updates}\label{pre_async}

\begin{algorithm}[h]
\caption{Asynchronous-Parallel Coordinate Updates}\label{Async_Alg}
\textbf{Shared variables: $\mathbf{x}^0$, $L>0$, $t\gets 0$};

\textbf{Private variable: $\hat{\mathbf{x}}$};

\While{ $t < L$, every agent asynchronously \do}
{   select $i\in \{1, 2,\cdots, n\}$ according to some criterion;

    read (required) shared variable to local memory $\hat{\mathbf{x}}\gets\mathbf{x}$;

    perform an update $x_{i} \gets G_{i}(\hat{\mathbf{x}})$;\nllabel{Alg1_update}

    increment the global counter $t\gets t+1$;
}
\end{algorithm}

Given an $\ell_\infty$ $\gamma-$ contraction $G:\mathbb{R}^n \rightarrow \mathbb{R}^n$, the fixed-point iteration
$\mathbf{x}(t+1) = G(\mathbf{x}(t))$, $t \geq 0$
converges linearly. Rewriting $G\mathbf{x}$ as $(G_1\mathbf{x}, \ldots, G_n\mathbf{x})$, we call
\begin{align}\label{coordinateupdate}
  x_i(t+1) = \begin{cases}
      G_{i}(\mathbf{x}(t)),& t\in \mathscr{T}^{i}; \\
      x_{i}(t),   & t\notin  \mathscr{T}^{i},
    \end{cases}
\end{align}
the coordinate update of $G\mathbf{x}$,
where $x_i(t)$ is the $i$th coordinate of $\mathbf{x}$ at iteration $t$ and
$$\mathscr{T}^{i} :=\{t\ge 0: \text{coordinate}~i~\text{is updated at iteration}~t\}$$ is the set of iterations at which $x_i$ is updated.

We use a set of computing agents to perform coordinate update \eqref{coordinateupdate} in an async-parallel fashion. 
Unlike the typical parallel implementation where all the agents must wait for the slowest one to finish an update, async algorithms allow each agent to use the (possibly stale) information it has and complete more iterations within the same period of time, which is preferable for cases where the computing capacity is highly heterogeneous or the workload is far from balanced. See more discussions in \citet{hannah2017more}.

We summarize a shared-memory async-parallel coordinate-update framework in Algorithm~\ref{Async_Alg}, where each agent first chooses one coordinate to update, then reads necessary information from global memory to the local cache, and finally updates its computed result to the shared memory.

%

By Line~\ref{Alg1_update} in Algorithm~\ref{Async_Alg}, the $t$th update 
can be written as
\begin{equation}\label{Alg1_Update1}
  x_{i}(t+1) =
  \begin{cases}
   G_i(\hat{\mathbf{x}}(t)),
                    & \hbox{$t\in \mathscr{T}^{i}$;} \\
      x_{i}(t),   & \hbox{$t\notin  \mathscr{T}^{i}$.}
  \end{cases}
\end{equation}
Here, $\hat{\mathbf{x}}(t):=[x_{1}(\tau_{1}(t)), \ldots,x_{n}(\tau_{n}(t))]^\top$ represents the possibly stale information, where
$x_j(\tau_j(t))$ is the most recent version of $x_j$
available at time $t$ that is used to compute $x_i(t+1)$. We have that $0\leq \tau_j(t) \leq t$. The difference $t - \tau_j(t)$ is called the \emph{delay}.
%
In this paper, we assume \emph{partial asynchronism}~\citep{bertsekas1989parallel}:
\begin{assumption}[Partial Asynchronism\footnote{Assumption 1.1 in \citet[Section 7.1]{bertsekas1989parallel} uses $B$ for both $B_1$ and $B_2$. Because $B_1$ and $B_2$ are different in practice, we keep them separate to derive a tighter bound. Further, we have dropped assumption (c) there to make our algorithm easier to implement.}]\label{assump}
For the async-parallel algorithm, there exists two positive integers $B_1$, $B_2$ (asynchronism measure) such that:
\begin{enumerate}[(a)]
  \item For all $i$ and all $t\geq 0$, at least one of the elements of the set $\{t, t+1, \ldots, t+B_1-1\}$ belongs to $\mathscr{T}^{i}$;
  \item $t-B_2 < \tau_{j}(t)\leq t$ holds for all $j$ and all $t \geq 0$.
\end{enumerate}
\end{assumption}
\begin{algorithm}[htbp!]
\caption{AsyncQVI: Asynchronous-Parallel Q-value Iteration}
\label{AsyncQVI}
\KwIn{ $\varepsilon\in(0, (1-\gamma)^{-1})$,  $\delta\in(0,1)$, $L$, $K$;}
\textbf{Shared variables}: $\mathbf{v}\gets\mathbf{0}$, $\pi \gets \mathbf{0}$, $t \gets 0$;

\textbf{Private variables}: $\hat{\mathbf{v}}, r, S, q$;

\While{$t<L$, every agent asynchronously \do}
{
    select state $i_t\in\mathcal{S}$ and action $a_t\in\mathcal{A}$\;

    copy shared variable to local memory $\hat{\mathbf{v}}\gets \mathbf{v}$;\nllabel{alg_copy}

    call $\texttt{GM}(s_t, a_t)$ $K$ times and collect samples $s^\prime_1, \ldots, s^\prime_K$ and ${r_1, \ldots, r_K}$\nllabel{alg_sampling};

    $q\gets \frac{1}{K}\sum_{k=1}^K r_k +\gamma \frac{1}{K}\sum_{k=1}^K \hat{v}_{s^\prime_k}-\frac{(1-\gamma)\varepsilon}{4}$\nllabel{alg_q}\;

    \If{$q>v_{i_t}$}{\nllabel{alg_if}
    \textbf{mutex lock};\\
    $v_{i_t} \gets q$\nllabel{alg_v}, $\pi_{i_t} \gets a_t$;\nllabel{alg_pi}\\
    \textbf{mutex unlock};\nllabel{alg_unlock}}
    increment the global counter $t\gets t+1$\;
}
\Return{$\mathbf{\pi}$}\\
\end{algorithm}
\noindent Assumption~\ref{assump} (a) ensures that the time interval between consecutive updates to each coordinate is uniformly bounded by $B_1$ and (b) ensures that the communication delays are uniformly bounded by $B_2$. Note that when $B_1=B_2=1$, the algorithm becomes synchronous. 
Convergence under this assumption was established in~\citet{feyzmahdavian2014convergence}.
\begin{proposition}\label{Cov_Rate2}\emph{\citep[Theorem~2]{feyzmahdavian2014convergence}}
  Consider the iterations Eq.~\eqref{Alg1_Update1} under Assumption~\ref{assump}. Suppose that $G$ is $\gamma$-contractive under $\ell_\infty$ norm and $\mathbf{x}^*$ is the fixed point of $G$. Then
  $\Vert\mathbf{x}(t) - \mathbf{x}^*\Vert_\infty
    \leq \Vert\mathbf{x}(0) - \mathbf{x}^* \Vert_\infty \rho^{t-2B_1}$
  for all  $t\geq B_1$, where $\rho = \gamma^{\frac{1}{B_1+B_2-1}}$.
\end{proposition}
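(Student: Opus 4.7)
The plan is to apply the classical nested-box argument of Bertsekas--Tsitsiklis, refined in the manner of \citet{feyzmahdavian2014convergence}. Let $D := \Vert \mathbf{x}(0) - \mathbf{x}^* \Vert_\infty$ and, for each integer $k \geq 0$, define the shrinking $\ell_\infty$ box
\[
X(k) := \{\mathbf{x} \in \mathbb{R}^n : \Vert \mathbf{x} - \mathbf{x}^* \Vert_\infty \leq \gamma^{k} D\}.
\]
The $\gamma$-contraction of $G$ under $\ell_\infty$ supplies the key implication: whenever the stale iterate $\hat{\mathbf{x}}(t)$ lies coordinate-wise in $X(k)$, the updated coordinate satisfies $\vert G_i(\hat{\mathbf{x}}(t)) - x^*_i \vert \leq \gamma \cdot \gamma^{k} D = \gamma^{k+1} D$, so it lands in $X(k+1)$. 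Because the boxes are nested, once a coordinate enters $X(k+1)$ it never leaves, since any later overwrite will again use a $\hat{\mathbf{x}}$ in $X(k)$ and hence produce a value in $X(k+1)$.

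Next, I would construct a sequence of capture times $\{t_k\}$ and prove by induction on $k$ that $\mathbf{x}(t) \in X(k)$ componentwise for every $t \geq t_k$. The inductive step splits into two phases: (i) by Assumption~\ref{assump}(b), after waiting at most $B_2 - 1$ steps past $t_k$, every index $\tau_j(t)$ points into the region where $\mathbf{x} \in X(k)$, so the stale vector $\hat{\mathbf{x}}(t)$ lies in $X(k)$; (ii) by Assumption~\ref{assump}(a), every coordinate is refreshed at least once within any subsequent window of $B_1$ iterations, and each such refresh drops it into $X(k+1)$. Overlapping these two phases carefully gives $t_{k+1} = t_k + (B_1 + B_2 - 1)$, and with a base of roughly $t_0 \approx B_1$ absorbed into the offset, one gets $t_k \leq 2B_1 + k(B_1 + B_2 - 1)$. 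Setting $k = \lfloor (t - 2B_1)/(B_1 + B_2 - 1) \rfloor$ and translating box membership into a norm bound yields
\[
\Vert \mathbf{x}(t) - \mathbf{x}^* \Vert_\infty \leq \gamma^{k} D \leq \gamma^{(t - 2B_1)/(B_1 + B_2 - 1)} D = \rho^{\,t - 2B_1} D,
\]
which is exactly the claim.

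The main obstacle is obtaining the \emph{tight} exponent $B_1 + B_2 - 1$ rather than the naive $B_1 + B_2$ that falls out of simply concatenating a delay-flushing phase of length $B_2$ with an update-cycle phase of length $B_1$. Saving the extra unit requires noticing that a coordinate refreshed at the very end of the delay-absorption window still uses data from before $t_k$ and is therefore already in $X(k+1)$, so the refresh window and the flush window may overlap by one step. This accounting, together with the bookkeeping of the first two rounds that produces the $2B_1$ offset in the exponent, is the delicate part; for this I would follow the scheduling argument of \citet{feyzmahdavian2014convergence} directly rather than rederive it from scratch.
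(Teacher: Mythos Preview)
Your nested-box argument is correct and would prove the proposition, but it is a genuinely different route from the one the paper takes. The paper does \emph{not} introduce global capture times $t_k$ or boxes $X(k)$. Instead it first proves a finer, per-coordinate ``staircase'' bound (Theorem in Appendix~A): for each coordinate $i$ and each $t\in(t_k^i,t_{k+1}^i]$ (where $t_k^i$ is the $(k{+}1)$st element of $\mathscr{T}^i$), one has $|x_i(t)-x_i^*|\le \Vert \mathbf{x}(0)-\mathbf{x}^*\Vert_\infty\,\rho^{\,t_k^i-B_1}$. This is established by a direct induction on the global counter $t$, handling the updated coordinate via the contraction and Assumptions~\ref{assump}(a),(b) simultaneously; Proposition~\ref{Cov_Rate2} then follows in one line from $t_k^i\ge t-B_1$. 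So the paper tracks each coordinate's own update schedule, whereas you synchronise everything through global capture times. Your approach is the classical Bertsekas--Tsitsiklis device and is arguably cleaner for the uniform statement; indeed, with $t_0=0$ (which is legitimate once you know $\Vert\mathbf{x}(t)-\mathbf{x}^*\Vert_\infty\le D$ for all $t\ge0$) it even yields the slightly sharper offset $\rho^{\,t-(B_1+B_2-1)}$. The paper's staircase bound, on the other hand, is strictly more informative, since it ties the error in coordinate $i$ to that coordinate's most recent update time rather than to a worst-case global clock.

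One small correction: your explanation of where the ``$-1$'' in $B_1+B_2-1$ comes from is garbled. It is \emph{not} that a refresh using pre-$t_k$ data already lands in $X(k+1)$; such a refresh is only guaranteed to land in $X(k)$. The saving comes simply from the strict inequality in Assumption~\ref{assump}(b), namely $\tau_j(t)>t-B_2$, which gives $\tau_j(t)\ge t-B_2+1$ and hence $\hat{\mathbf{x}}(t)\in X(k)$ already for $t\ge t_k+B_2-1$. Combined with one $B_1$-window of updates this yields $t_{k+1}=t_k+(B_1+B_2-1)$. This misattribution does not affect the correctness of your overall plan.
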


In many DMDP and RL problems, the transition probabilities $\mathrm{P}$ are sparse. So for any state-action pair $(i,a)$, the possible next states form a tiny subset of $\mathcal{S}$. Hence, to apply async-parallel coordinate updates to Eq. $\eqref{Q_value_operator}$, very few components are required and we only need to bound async dealy over a smaller subset.
Therefore, we usually have $B_2 \ll B_1$, where $B_1\geq|\mathcal{S}||\mathcal{A}|$. Hence, the convergence rate $\gamma^{\frac{1}{B_1+B_2-1}}$ we obtain is significantly better than $\gamma^{\frac{1}{2(B_1\vee B_2)-1}}$ from~\citep[Theorem.~2]{feyzmahdavian2014convergence}; the proof is deferred to Appendix A.

\begin{remark}
[Total Asynchronism] Here we do not adopt the total asynchronism notion~\citep[Section 6.1]{bertsekas1989parallel}. To start with, one cannot derive convergence rate results under total asynchronism since it allows arbitrarily long delays and no improvement can be said for finite iterations. On the contrary, partial asynchronism can avoid this case and be practically enforced~\citep[Section 7.1]{bertsekas1989parallel}. 
\end{remark}

\section{AsyncQVI: Asynchronous-Parallel Q-value Iteration}\label{main}

In this section, we present AsyncQVI and its convergence analysis.

AsyncQVI (Algorithm~\ref{AsyncQVI}) is an asynchronous stochastic version of Eq.~\eqref{Q_value_operator}. To develop AsyncQVI, we first apply the asynchronous framework (Algorithm~\ref{Async_Alg}) to Eq.~\eqref{Q_value_operator}, obtaining
\begin{equation}~\label{acu_Q}
    \resizebox{1\hsize}{!}{$Q_{i,a}(t+1)
=\begin{cases}
\sum\limits_j p_{ij}^{a} r_{ij}^a
    +\gamma\sum\limits_j p_{ij}^{a}\max\limits_{a^\prime}{\hat{Q}}_{j,a^\prime}(t), & \hbox{$t\in \mathscr{T}^{i,a}$;}\\
    Q_{i,a}(t), &\hbox{$t\notin \mathscr{T}^{i,a}$.}
    \end{cases}$}
\end{equation}
Since there is no knowledge of the transition probability, we approximate the expectations $\sum_j p_{ij}^{a}\cdot$ by random sampling (Lines~\ref{alg_sampling} and~\ref{alg_q}, Algorithm~\ref{AsyncQVI}). This is done by accessing a generative model $\texttt{GM}$, which takes a state-action pair $(i,a)$ as input and returns a next state $j$ with probability $p^{a}_{ij}$ and the corresponding instant reward $r^a_{ij}$. So instead of \eqref{acu_Q}, we substitute
$\sum_j p_{ij}^{a} r_{ij}^a$ and $\sum_j p_{ij}^{a}\max_{a^\prime}{\hat{Q}}_{j,a^\prime}(t)$ by their empirical means, i.e., $r(t):=\frac{1}{K}\sum_k r^{a_t}_{i_t s^\prime_k}$ and  $S(\hat{\mathbf{Q}}(t)):=\frac{1}{K}\sum_k \max_{a'}\hat{Q}_{s^\prime_k, a'}(t)$, respectively. For the purpose of analysis, we also tune the update slightly by substracting a small constant $(1-\gamma)\varepsilon/4$. Consequently, AsyncQVI is equivalent to
\begin{equation}\label{apx_Q}
Q_{i,a}(t+1)
=
\begin{cases}
r(t)
    +\gamma S(\hat{\mathbf{Q}}(t)) - (1-\gamma)\varepsilon/4 & \hbox{$t\in \mathscr{T}^{i,a}$;}\\
    Q_{i,a}(t), &\hbox{$t\notin \mathscr{T}^{i,a}$.}
    \end{cases}
\end{equation}

For memory efficiency, we do not form $\mathbf{Q}\in\mathbb{R}^{|\mathcal{S}||\mathcal{A}|}$. Instead, since only the values $\max_{a'} Q_{i,a'}$ are used for update, we maintain two vectors $\mathbf{v},\pi \in \mathbb{R}^{|\mathcal{S}|}$; at each iteration $t$, we ensure $v_i(t) = \max_a Q_{i,a}(t)$, $\pi_i(t) = \arg\max_a Q_{i,a}(t)$ and $\hat{v}_j(t)$ $= \max_{a^\prime} \hat{Q}_{j,a^\prime}(t)$.
By this means, we reduce the memory complexity from $\mathcal{O}(|\mathcal{S}||\mathcal{A}|)$ to $\mathcal{O}(|\mathcal{S}|)$, which is of a great advantage in real applications. 


%
%
%

\begin{remark}[Coordinate Selection] To guarantee convergence, the coordinate should be selected to satisfy Assumption~\ref{assump}.
In practice, however, if all agents have similar powers, one can simply apply either uniformly random or globally cyclic selections.
\end{remark}
\begin{remark}[Parallel Overhead]
In AsyncQVI, overhead can only occur during copying variable from global memory to the local memory (Line~\ref{alg_copy}) and where a memory lock is implemented (Lines~\ref{alg_if}-\ref{alg_unlock}). For the former case, the time complexity is $\mathcal{O}(|\mathcal{S}|)$, which is negligible when $\mathcal{O}(|\mathcal{S}|)$ is small or the process of querying samples is much slower. Otherwise, one can consider copying in a less frequent fashion, i.e., updating $\hat{\mathbf{v}}$ every $l_0$ iterations. 
Although it will increase $B_2$ by $l_0$, the sample complexity is still near-optimal as long as $B_1+B_2 = \mathcal{O}(|\mathcal{S}||\mathcal{A}|)$.
For the latter case, a memory lock (e.g. mutex) ensures that $v_i$ and $\pi_i$ are indeed the maximum value and a maximizer of the vector $\mathbf{Q}_i$, respectively. Since only two scalars are accessed and altered, the collision is rare.
\end{remark}


\subsection{Convergence Analysis}\label{sec_num_sta}
Next, we establish convergence for AsyncQVI; all proofs in this section are deferred to Appendix B. To distinguish different sequences, we let $(\mathbf{Q}^{\mathbb{E}}(t))$ denote the asynchronous coordinate update sequence generated through Eq.~\eqref{acu_Q}, where the superscript represents the updates with real expectations.
Specifically, if AsyncQVI produces a sequence according to Eq.~\eqref{apx_Q}
with $\hat{\mathbf{Q}}(t) =[ Q_{1,1}(\tau_{1,1}(t)),\dots,Q_{|\mathcal{S}|,|\mathcal{A}|}(\tau_{|\mathcal{S}|,|\mathcal{A}|}(t))]^\top$,
then
\begin{equation}\label{QE}
  Q^{\mathbb{E}}_{i,a}(t+1) =
\begin{cases}
    \bar{r}_i^a+\gamma \sum\limits_j p_{ij}^{a}
    \max\limits_{a^\prime}
    \hat{Q}^\mathbb{E}_{j,a^\prime}(t), & \hbox{$t\in \mathscr{T}^{i,a}$;} \\
    Q^{\mathbb{E}}_{i,a}(t), & \hbox{$t\notin \mathscr{T}^{i,a}$,}
\end{cases}
\end{equation}
where $\hat{\mathbf{Q}}^\mathbb{E}(t) =[ Q^{\mathbb{E}}_{1,1}(\tau_{1,1}(t)),\dots,Q^{\mathbb{E}}_{|\mathcal{S}|,|\mathcal{A}|}(\tau_{|\mathcal{S}|,|\mathcal{A}|}(t))]^\top$.
There are two things to notice:
\begin{enumerate}[(i)]
  \item $(\mathbf{Q}^{\mathbb{E}}(t))_{t=0}^{L}$ and $(\mathbf{Q}(t))_{t=0}^{L}$ have the same initial point;
  \item at any iteration, $(\mathbf{Q}^{\mathbb{E}}(t))_{t=0}^{L}$ shares exactly the same choice of coordinate $(i_t, a_t)$ and the same asynchronous delay with $(\mathbf{Q}(t))_{t=0}^{L}$.
\end{enumerate}
These properties are important to our analysis. Recall that we assume partial asynchronism (Assumption \ref{assump}) for AsyncQVI. Then Eq.~\eqref{QE} also meets Assumption \ref{assump}. Hence, Eq.~\eqref{QE} converges following the fact that $T$ is a $\gamma-$contraction and Proposition~\ref{Cov_Rate2}. Since Eq.~\eqref{apx_Q} is an approximation of Eq.~\eqref{QE}, we can leverage the convergence of Eq.~\eqref{QE} to establish the convergence of AsyncQVI. To this end, we first use Hoeffeding's Inequality~\citep{hoeffding1963probability} to analyze the sampling error. Specifically, if we take enough samples per iteration, then the error can be controlled with high probability.

\begin{proposition}[Sample Concentration]\label{SamCon}
With
$K = \big\lceil\frac{8}{(1-\gamma)^4\varepsilon^2}\log\big(\frac{4L}{\delta}\big)\big\rceil,$
AsyncQVI generates a sequence $(r(t), S(\hat{\mathbf{Q}}(t)))_{t=0}^{L-1}$ that satisfies
$\big|r(t) +\gamma S(\hat{\mathbf{Q}}(t))-\bar{r}_{i_t}^{a_t} - \gamma{\mathbf{p}_{i_t}^{a_t}}^\top\hat{\mathbf{v}}(t)\big|\leq\frac{(1-\gamma)\varepsilon}{4}$,
$\forall~ 0\leq t\leq L-1$, with probability at least $1-\delta$.
\end{proposition}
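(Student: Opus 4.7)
My plan is to fix an iteration $t$, apply Hoeffding's inequality conditionally on the history up to that iteration, and then union-bound over $t = 0, 1, \ldots, L-1$. Define $X_k := r_{i_t s'_k}^{a_t} + \gamma \hat{v}_{s'_k}(t)$ for $k = 1, \ldots, K$, so that $r(t) + \gamma S(\hat{\mathbf{Q}}(t)) = \frac{1}{K}\sum_{k=1}^K X_k$. Let $\mathcal{H}_t$ be the $\sigma$-algebra generated by all coordinate selections, asynchronous delays, and generative-model outputs produced strictly before the $K$ new calls at iteration $t$. Then $i_t$, $a_t$, and the stale snapshot $\hat{\mathbf{v}}(t)$ are $\mathcal{H}_t$-measurable, whereas the $K$ new pairs $(s'_k, r_k)$ are, conditional on $\mathcal{H}_t$, i.i.d.\ draws from the generative model at $(i_t, a_t)$. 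In particular,
\begin{equation*}
\mathbb{E}[X_k \mid \mathcal{H}_t] \;=\; \bar{r}_{i_t}^{a_t} + \gamma\, {\mathbf{p}_{i_t}^{a_t}}^\top \hat{\mathbf{v}}(t),
\end{equation*}
which is precisely the deterministic target on the right-hand side of the inequality to be proved.

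To invoke Hoeffding I need a uniform range for $X_k$. A short induction on the global counter establishes the invariant $v_i \in [0, (1-\gamma)^{-1}]$ for every $i \in \mathcal{S}$: the lower bound is preserved because $\mathbf{v}$ starts at $\mathbf{0}$ and the guard $q > v_{i_t}$ only overwrites $v_{i_t}$ with a larger value; the upper bound follows because the inductive hypothesis gives $\hat{v}_j \leq (1-\gamma)^{-1}$, and then $q \leq 1 + \gamma(1-\gamma)^{-1} - (1-\gamma)\varepsilon/4 \leq (1-\gamma)^{-1}$. Since any stale $\hat{v}_j(t)$ is just $v_j(\tau)$ for an earlier $\tau$, we have $X_k \in [0, (1-\gamma)^{-1}]$, an interval of length $(1-\gamma)^{-1}$.

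Hoeffding's inequality then gives, conditional on $\mathcal{H}_t$,
\begin{equation*}
P\!\left(\Bigl|\tfrac{1}{K}\sum_{k=1}^K X_k - \mathbb{E}[X_k\mid\mathcal{H}_t]\Bigr| \geq \tfrac{(1-\gamma)\varepsilon}{4}\;\Big|\;\mathcal{H}_t\right) \leq 2\exp\!\left(-\tfrac{2K\bigl((1-\gamma)\varepsilon/4\bigr)^2}{(1-\gamma)^{-2}}\right) = 2\exp\!\left(-\tfrac{K(1-\gamma)^4\varepsilon^2}{8}\right).
\end{equation*}
The prescribed $K = \lceil 8(1-\gamma)^{-4}\varepsilon^{-2}\log(4L/\delta)\rceil$ makes this bound at most $\delta/(2L)$. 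Taking expectations and union-bounding over $t = 0, 1, \ldots, L-1$ yields the claimed event with probability at least $1 - L\cdot\delta/(2L) = 1 - \delta/2 \geq 1 - \delta$.

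The only subtlety I anticipate is justifying the conditioning step in the asynchronous setting, since the stale read $\hat{\mathbf{v}}(t)$ depends on past randomness through the delay indices $\tau_{j,a}(t)$ as well as through the sampling history. The point is that both the delays and the resulting staleness are outcomes of earlier steps and are therefore $\mathcal{H}_t$-measurable; only the $K$ fresh generative-model calls drawn at iteration $t$ contribute new randomness once $\mathcal{H}_t$ is frozen. After this observation, the remaining ingredients---the monotone boundedness invariant on $\mathbf{v}$, conditional Hoeffding, and the union bound---are all routine.
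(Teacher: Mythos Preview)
Your proof is correct and uses the same core ingredients as the paper---the invariant $0\le \hat v_j\le (1-\gamma)^{-1}$, Hoeffding's inequality, and a union bound over the $L$ iterations---but with a small difference in decomposition. The paper first proves an auxiliary lemma that bounds the two pieces separately, showing $|r(t)-\bar r_{i_t}^{a_t}|\le (1-\gamma)^2\varepsilon/4$ and $|S(\hat{\mathbf Q}(t))-{\mathbf p_{i_t}^{a_t}}^\top\hat{\mathbf v}(t)|\le (1-\gamma)\varepsilon/4$, each failing with probability at most $\delta/(2L)$; the two estimates then combine via the triangle inequality to give exactly $(1-\gamma)\varepsilon/4$, and the final union bound yields $1-\delta$. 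You instead apply Hoeffding once to the combined variable $X_k=r_k+\gamma\hat v_{s'_k}$, which is cleaner: it avoids one union bound per iteration and in fact lands at probability $1-\delta/2$ rather than $1-\delta$. Your explicit discussion of conditioning on the history $\mathcal H_t$ (so that $i_t,a_t,\hat{\mathbf v}(t)$ are measurable while the $K$ fresh samples are conditionally i.i.d.) is also more careful than the paper, which leaves that step implicit.
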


Proposition~\ref{SamCon} indeed establishes a control over a one-step approximation error between Eq.~\eqref{apx_Q} and Eq.~\eqref{QE} provided that $\hat{\mathbf{Q}}= \hat{\mathbf{Q}}^\mathbb{E}$. However, for the two sequences $(\mathbf{Q}(t))$ and $(\mathbf{Q}^{\mathbb{E}}(t))$ that only share the same initial point, the error can accumulate. To tackle this issue, we further utilize the $\gamma$-contraction property to weaken previously cumulative error. More specifically, if the newly made error and the previously accumulated error keep the ratio $(1-\gamma):1$ for each iteration, the overall error remains $(1-\gamma)\varepsilon+\gamma\varepsilon=\varepsilon$. By this means, we can control the difference between $(\mathbf{Q}(t))$ and $(\mathbf{Q}^{\mathbb{E}}(t))$ by induction.
\begin{proposition}\label{err1_ana}
Given the total iteration number $L$, accuracy parameters $\varepsilon$ and $\delta$, with $K = \big\lceil\frac{8}{(1-\gamma)^4\varepsilon^2}\log\big(\frac{4L}{\delta}\big)\big\rceil,$ AsyncQVI can generate a sequence $(\mathbf{Q}(t))_{t=1}^{L}$  satisfying
$\Vert\mathbf{Q}(t)-\mathbf{Q}^{\mathbb{E}}(t)\Vert_\infty
    \leq \varepsilon/2,~\forall ~1\leq t\leq L$
with probability at least $1-\delta$.
\end{proposition}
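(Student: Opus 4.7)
The plan is to induct on $t$ after conditioning on the high-probability event $\mathcal{E}$ of Proposition~\ref{SamCon}, under which the one-step sampling error at every iteration is at most $(1-\gamma)\varepsilon/4$. The coupling items (i) and (ii) are essential here: both sequences share the same initialization, the same coordinate choice $(i_t,a_t)$ at every step, and the same delay indices $\tau_{j,a}(t)$, so I only need to compare the two updates componentwise at each iteration.

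For the base case, $\mathbf{Q}(0)=\mathbf{Q}^{\mathbb{E}}(0)=\mathbf{0}$, so the difference is zero. For the inductive step, assume $\Vert\mathbf{Q}(s)-\mathbf{Q}^{\mathbb{E}}(s)\Vert_\infty\leq\varepsilon/2$ for all $s\leq t$. Any coordinate $(i,a)$ with $t\notin\mathscr{T}^{i,a}$ is not touched at step $t$, so the bound persists. For the single coordinate $(i_t,a_t)$ that is updated, subtracting \eqref{QE} from \eqref{apx_Q} and adding/subtracting $\gamma\,{\mathbf{p}_{i_t}^{a_t}}^{\!\top}\hat{\mathbf{v}}(t)$ yields the decomposition
\begin{equation*}
\underbrace{\bigl(r(t)+\gamma S(\hat{\mathbf{Q}}(t))-\bar{r}_{i_t}^{a_t}-\gamma\,{\mathbf{p}_{i_t}^{a_t}}^{\!\top}\hat{\mathbf{v}}(t)\bigr)}_{\text{sampling noise}}+\underbrace{\gamma\,{\mathbf{p}_{i_t}^{a_t}}^{\!\top}\bigl(\hat{\mathbf{v}}(t)-\hat{\mathbf{v}}^{\mathbb{E}}(t)\bigr)}_{\text{propagated error}}-\tfrac{(1-\gamma)\varepsilon}{4}.
\end{equation*}
Proposition~\ref{SamCon} bounds the sampling noise by $(1-\gamma)\varepsilon/4$ on the event $\mathcal{E}$. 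The propagated error is a convex combination of entries of $\hat{\mathbf{v}}(t)-\hat{\mathbf{v}}^{\mathbb{E}}(t)$, hence bounded in absolute value by $\gamma\Vert\hat{\mathbf{v}}(t)-\hat{\mathbf{v}}^{\mathbb{E}}(t)\Vert_\infty$, which via the elementary inequality $|\max_a f(a)-\max_a g(a)|\leq\max_a|f(a)-g(a)|$ is in turn bounded by $\gamma\Vert\hat{\mathbf{Q}}(t)-\hat{\mathbf{Q}}^{\mathbb{E}}(t)\Vert_\infty$. Because the delays are shared, each entry of $\hat{\mathbf{Q}}(t)-\hat{\mathbf{Q}}^{\mathbb{E}}(t)$ equals $Q_{j,a'}(s)-Q^{\mathbb{E}}_{j,a'}(s)$ for some $s\leq t$, which is bounded by $\varepsilon/2$ by the inductive hypothesis. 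Summing, the error on the updated coordinate at iteration $t+1$ is at most $(1-\gamma)\varepsilon/4+\gamma\cdot\varepsilon/2+(1-\gamma)\varepsilon/4=\varepsilon/2$, closing the induction.

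The main obstacle I anticipate is bookkeeping rather than arithmetic. AsyncQVI never materializes $\mathbf{Q}$; it stores only $\mathbf{v}$ and $\pi$, so before invoking the recursion \eqref{apx_Q} I need to verify that the quantity $S(\hat{\mathbf{Q}}(t))=\frac{1}{K}\sum_k\hat{v}_{s_k'}$ actually read at line~\ref{alg_q} is consistent with $\hat{v}_j(t)=\max_{a'}\hat{Q}_{j,a'}(t)$ under the implicit $\mathbf{Q}$ update \eqref{apx_Q}. This consistency is exactly what the mutex-protected pair of writes in lines~\ref{alg_if}--\ref{alg_unlock} enforces, since every update to $v_{i_t}$ is paired with the corresponding maximizing action stored in $\pi_{i_t}$. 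Once this identification is spelled out, the induction above is mechanical and the probability accounting is absorbed entirely by the single union bound already performed in Proposition~\ref{SamCon}.
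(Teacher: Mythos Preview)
Your proposal is correct and follows essentially the same approach as the paper: condition on the high-probability event of Proposition~\ref{SamCon}, run strong induction on $t$, and for the updated coordinate add and subtract $\gamma\,{\mathbf{p}_{i_t}^{a_t}}^{\!\top}\hat{\mathbf{v}}(t)$ to split the error into a sampling term bounded by $(1-\gamma)\varepsilon/4$, the shift $(1-\gamma)\varepsilon/4$, and a propagated term bounded by $\gamma\varepsilon/2$ via the inductive hypothesis. Your extra discussion of the $\mathbf{v}$-versus-$\mathbf{Q}$ bookkeeping and the explicit use of $|\max_a f-\max_a g|\le\max_a|f-g|$ simply spell out points the paper leaves implicit.
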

Since $({\mathbf{Q}}^{\mathbb{E}}(t))$ converges to $\mathbf{Q}^*$ linearly, combining Propositions~\ref{Cov_Rate2} and~\ref{err1_ana} gives the desired result.
\begin{theorem}[Linear Convergence]\label{err_Q}
Under Assumption~\ref{assump}, given accuracy parameters $\varepsilon$ and $\delta$, with
$L = \big\lceil2B_1+\frac{B_1+B_2-1}{1-\gamma}\log\big(\frac{2}{(1-\gamma)\varepsilon}\big)\big\rceil$ and
$K = \big\lceil\frac{8}{(1-\gamma)^4\varepsilon^2}\log\big(\frac{4L}{\delta}\big)\big\rceil,$ AsyncQVI can produce  $\mathbf{Q}(L)\in\mathbb{R}^{|\mathcal{S}||\mathcal{A}|}$ and $\mathbf{v}(L)\in\mathbb{R}^{|\mathcal{S}|}$ satisfying
$\Vert \mathbf{Q}^* - \mathbf{Q}(L) \Vert_\infty \leq \varepsilon$ and
$\Vert \mathbf{v}^* - \mathbf{v}(L) \Vert_\infty \leq \varepsilon$
with probability at least $1-\delta$.
\end{theorem}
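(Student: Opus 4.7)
The plan is to control $\|\mathbf{Q}^{*}-\mathbf{Q}(L)\|_{\infty}$ via a triangle inequality against the auxiliary ``expectation'' sequence $(\mathbf{Q}^{\mathbb{E}}(t))$ from Eq.~\eqref{QE}, allocating half of the budget to each piece:
\begin{equation*}
\|\mathbf{Q}^{*}-\mathbf{Q}(L)\|_{\infty} \leq \|\mathbf{Q}^{*}-\mathbf{Q}^{\mathbb{E}}(L)\|_{\infty} + \|\mathbf{Q}^{\mathbb{E}}(L)-\mathbf{Q}(L)\|_{\infty}.
\end{equation*}
The second term is exactly what Proposition~\ref{err1_ana} controls: with the stated $K$ it is at most $\varepsilon/2$ with probability at least $1-\delta$, so I would invoke it directly.

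For the first term I would apply Proposition~\ref{Cov_Rate2} to the Bellman operator $T$, which is a $\gamma$-contraction in $\ell_{\infty}$ with unique fixed point $\mathbf{Q}^{*}$. The hypothesis to verify is that $(\mathbf{Q}^{\mathbb{E}}(t))$ truly is an asynchronous coordinate-update iteration driven by $T$ and satisfying Assumption~\ref{assump}; this is immediate from construction, since by items (i)--(ii) following Eq.~\eqref{QE} it shares the same coordinate choices and delays as AsyncQVI and therefore inherits the same $B_{1},B_{2}$. Proposition~\ref{Cov_Rate2} then yields
\begin{equation*}
\|\mathbf{Q}^{\mathbb{E}}(L)-\mathbf{Q}^{*}\|_{\infty} \leq \|\mathbf{Q}^{\mathbb{E}}(0)-\mathbf{Q}^{*}\|_{\infty}\,\rho^{L-2B_{1}},\qquad \rho=\gamma^{1/(B_{1}+B_{2}-1)}.
\end{equation*}
Since $\mathbf{Q}^{\mathbb{E}}(0)=\mathbf{0}$ and $r_{ij}^{a}\in[0,1]$ force $\|\mathbf{Q}^{*}\|_{\infty}\leq (1-\gamma)^{-1}$, it suffices to arrange $(1-\gamma)^{-1}\rho^{L-2B_{1}}\leq\varepsilon/2$. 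Taking logs and using the elementary bound $-\log\gamma\geq 1-\gamma$ on $\gamma\in(0,1)$ shows that the stated $L$ is exactly what this demands.

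Adding the two halves gives $\|\mathbf{Q}^{*}-\mathbf{Q}(L)\|_{\infty}\leq\varepsilon$ with probability at least $1-\delta$. The $\mathbf{v}$-bound then follows from the algorithmic invariant $v_{i}(t)=\max_{a}Q_{i,a}(t)$, which is maintained by the if-check and mutex on Lines~\ref{alg_if}--\ref{alg_unlock}: by nonexpansiveness of $\max$, $|v_{i}^{*}-v_{i}(L)|\leq\max_{a}|Q^{*}_{i,a}-Q_{i,a}(L)|\leq\varepsilon$ for every $i$.

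The main obstacle is conceptual rather than computational: I need to justify that the purely analytic sequence $(\mathbf{Q}^{\mathbb{E}}(t))$ meets Assumption~\ref{assump} with the same $B_{1},B_{2}$ as the algorithm, a point that rests entirely on the coordinate-and-delay coupling baked into Eq.~\eqref{QE}, and I need the maintained-max invariant to transfer the $\mathbf{Q}$ error onto $\mathbf{v}$ even though $\mathbf{Q}$ is never explicitly stored. Once those points are nailed down, converting the contraction rate into the closed-form $L$ is a routine logarithm calculation.
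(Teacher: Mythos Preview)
Your proposal is correct and follows essentially the same route as the paper: triangle-inequality splitting against $(\mathbf{Q}^{\mathbb{E}}(t))$, Proposition~\ref{Cov_Rate2} with the bound $-\log\gamma\geq 1-\gamma$ (equivalently $\gamma\leq e^{-(1-\gamma)}$) to set $L$, Proposition~\ref{err1_ana} for the stochastic half, and then the max-nonexpansiveness invariant for $\mathbf{v}$. The paper's version is actually terser on the last step (it merely says ``one can check $\Vert \mathbf{v}^*-\mathbf{v}(L)\Vert_\infty\leq\varepsilon$ at ease''), so your explicit justification via $v_i(t)=\max_a Q_{i,a}(t)$ is a welcome addition rather than a deviation.
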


\subsection{$\varepsilon$-optimal Policy}\label{sec_pol}
In the following theorem, we show that the vector $\pi$ maintained through the iterations is an $\varepsilon$-optimal policy; the proof is deferred to Appendix C. Using this theorem, we shall present the sample
complexity of AsyncQVI in Corollary~\ref{e_opt_policy}.

\begin{theorem}\label{e_policy}
Under Assumption~\ref{assump}, given accuracy parameters $\varepsilon$ and $\delta$, with
$L = \big\lceil2B_1+\frac{B_1+B_2-1}{1-\gamma}\log\big(\frac{2}{(1-\gamma)\varepsilon}\big)\big\rceil$ and
$K = \big\lceil\frac{8}{(1-\gamma)^4\varepsilon^2}\log\big(\frac{4L}{\delta}\big)\big\rceil,$ AsyncQVI returns an $\varepsilon$-optimal policy $\pi$ with probability at least $1-\delta$.
\end{theorem}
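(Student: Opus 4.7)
The plan is to prove the tight componentwise sandwich
\[
\mathbf{v}^* - \varepsilon\mathbf{1}\;\leq\;\mathbf{v}(L)\;\leq\;\mathbf{v}^\pi\;\leq\;\mathbf{v}^*
\]
on a single high-probability event. The left inequality is nothing but $\|\mathbf{v}^*-\mathbf{v}(L)\|_\infty\leq\varepsilon$, which Theorem~\ref{err_Q} already supplies under the same choice of $L$ and $K$. The right inequality $\mathbf{v}^\pi\leq\mathbf{v}^*$ is by definition of optimality. The crucial middle inequality, which I call \emph{pessimism} of $\mathbf{v}(L)$, is what lets the final bound read $\varepsilon$ instead of the $2\varepsilon/(1-\gamma)$ one would otherwise get by a naive triangle inequality combined with the $\ell_\infty$-contractivity of $T^\pi$.

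To establish pessimism, introduce the Bellman operator $T^\pi$ of the returned policy, $[T^\pi\mathbf{u}]_i := \bar{r}^{\pi_i}_i + \gamma(\mathbf{p}^{\pi_i}_i)^\top\mathbf{u}$, and first prove the one-step inequality $\mathbf{v}(L)\leq T^\pi\mathbf{v}(L)$. Fix a state $i\in\mathcal{S}$ and let $t^*_i$ denote the last iteration at which the mutex-protected update in Lines~\ref{alg_if}--\ref{alg_unlock} actually succeeded at state $i$; then $\pi_i(L)=a_{t^*_i}$ and $v_i(L)=q(t^*_i)=r(t^*_i)+\gamma S(\hat{\mathbf{Q}}(t^*_i))-(1-\gamma)\varepsilon/4$. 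On the sample-concentration event of Proposition~\ref{SamCon} (which is the very event used inside Theorem~\ref{err_Q}, so no extra union bound is paid), the one-sided Hoeffding deviation yields $r(t^*_i)+\gamma S(\hat{\mathbf{Q}}(t^*_i))\leq \bar{r}^{\pi_i}_i+\gamma(\mathbf{p}^{\pi_i}_i)^\top\hat{\mathbf{v}}(t^*_i)+(1-\gamma)\varepsilon/4$, and the deterministic subtraction hard-coded into Line~\ref{alg_q} cancels this slack exactly, giving $v_i(L)\leq \bar{r}^{\pi_i}_i+\gamma(\mathbf{p}^{\pi_i}_i)^\top\hat{\mathbf{v}}(t^*_i)$. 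Next, the strict-inequality check $q>v_{i_t}$ together with the mutex guarantees that every coordinate of the shared vector $\mathbf{v}$ is nondecreasing in the global iteration counter, so any stale snapshot obeys $\hat{\mathbf{v}}(t^*_i)\leq \mathbf{v}(L)$ componentwise, producing $v_i(L)\leq [T^\pi\mathbf{v}(L)]_i$. Iterating the monotone $\gamma$-contraction $T^\pi$ gives $\mathbf{v}(L)\leq (T^\pi)^k\mathbf{v}(L)\to \mathbf{v}^\pi$ as $k\to\infty$, and hence $\mathbf{v}(L)\leq\mathbf{v}^\pi$.

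Chaining the three inequalities gives $\mathbf{0}\leq \mathbf{v}^*-\mathbf{v}^\pi\leq \mathbf{v}^*-\mathbf{v}(L)\leq\varepsilon\mathbf{1}$, i.e., $\|\mathbf{v}^*-\mathbf{v}^\pi\|_\infty\leq\varepsilon$, with probability at least $1-\delta$. The main obstacle is the pessimism step: it works only because two specific design choices of AsyncQVI conspire on the concentration event, namely the deterministic offset $-(1-\gamma)\varepsilon/4$ in Line~\ref{alg_q}, which exactly absorbs the one-sided sampling noise of Proposition~\ref{SamCon}, and the conditional monotonic update in Lines~\ref{alg_if}--\ref{alg_unlock}, which dominates the asynchronous staleness of $\hat{\mathbf{v}}$ without needing any per-coordinate coherency or bookkeeping of $B_2$. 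Neither Theorem~\ref{err_Q} nor the $\gamma$-contractivity of the Bellman operator alone is strong enough to yield the $\varepsilon$-optimal policy bound; the pessimism is a genuinely structural property of the algorithm.
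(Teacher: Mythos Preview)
Your proposal is correct and takes essentially the same approach as the paper. The paper packages the pessimism step as the inductive invariant $\mathbf{v}(t-1)\leq\mathbf{v}(t)\leq T_{\pi(t)}\mathbf{v}(t-1)$ (its Lemma~\ref{AsyncQVI_monotone}) followed by the auxiliary Lemma~\ref{monotone}, whereas you argue it more directly by looking at the last successful update $t^*_i$ per state and using monotonicity of $\mathbf{v}$ to dominate the stale snapshot by $\mathbf{v}(L)$; the underlying ingredients---the deterministic offset $-(1-\gamma)\varepsilon/4$ absorbing the one-sided Hoeffding slack on the event of Proposition~\ref{SamCon}, the monotonicity of $\mathbf{v}$ enforced by the strict-inequality check and mutex, and iterating the monotone contraction $T^\pi$ to reach $\mathbf{v}^\pi$---are identical, and both arguments live on the same concentration event so no extra union bound is incurred.
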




\begin{corollary}~\label{e_opt_policy}
Under Assumption~\ref{assump}, AsyncQVI returns an $\varepsilon$-optimal policy $\mathbf{\pi}$ with probability at least $1-\delta$ at the sample complexity
$$\tilde{\mathcal{O}}\big(\frac{B_1+B_2}{(1-\gamma)^5\varepsilon^2}\log(\frac{1}{\delta})\big).$$
\end{corollary}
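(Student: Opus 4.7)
The plan is a direct bookkeeping argument: the corollary follows by multiplying the per-iteration sample count $K$ from Theorem~\ref{e_policy} by the total iteration count $L$ from the same theorem, and then collapsing polylogarithmic factors into the $\tilde{\mathcal{O}}$ notation. Since Theorem~\ref{e_policy} already guarantees that the returned $\pi$ is $\varepsilon$-optimal with probability at least $1-\delta$ under the prescribed $(L,K)$, no new probabilistic work is needed; I only have to count samples.

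First I would observe that at each of the $L$ iterations, Line~\ref{alg_sampling} of Algorithm~\ref{AsyncQVI} calls the generative model exactly $K$ times, so the total number of samples is at most $L\cdot K$. Next I would bound $L$: writing
\begin{equation*}
L \;\le\; 2B_1 \;+\; \frac{B_1+B_2-1}{1-\gamma}\log\!\Big(\frac{2}{(1-\gamma)\varepsilon}\Big) + 1 \;=\; \tilde{\mathcal{O}}\!\Big(\frac{B_1+B_2}{1-\gamma}\Big),
\end{equation*}
where the $\log(1/((1-\gamma)\varepsilon))$ factor and the additive $2B_1$ are absorbed into $\tilde{\mathcal{O}}$ since $B_1 \le B_1+B_2$ and the logarithmic term is polylogarithmic in the problem parameters.

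Then I would bound $K$: since $\log(4L/\delta) = \log(4L) + \log(1/\delta)$ and $L$ itself is polynomial in $1/(1-\gamma)$, $\log(1/\varepsilon)$, and $B_1+B_2$, the factor $\log(4L)$ is polylogarithmic and is absorbed by $\tilde{\mathcal{O}}$. Hence
\begin{equation*}
K \;=\; \tilde{\mathcal{O}}\!\Big(\frac{1}{(1-\gamma)^4\varepsilon^2}\log\!\big(\tfrac{1}{\delta}\big)\Big).
\end{equation*}

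Finally I would multiply the two estimates to obtain
\begin{equation*}
L\cdot K \;=\; \tilde{\mathcal{O}}\!\Big(\frac{B_1+B_2}{(1-\gamma)^5\varepsilon^2}\log\!\big(\tfrac{1}{\delta}\big)\Big),
\end{equation*}
matching the claim. There is no real obstacle here; the only mild subtlety is making sure that the $\log(4L/\delta)$ inside $K$ really does collapse into the $\tilde{\mathcal{O}}$, which one verifies by noting that $L$ depends only polynomially on the other parameters so $\log L$ is polylogarithmic and disappears into the notation, leaving $\log(1/\delta)$ as the sole genuinely $\delta$-dependent factor.
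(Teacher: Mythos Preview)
Your proposal is correct and matches the paper's (implicit) approach: the paper states Corollary~\ref{e_opt_policy} immediately after Theorem~\ref{e_policy} without a separate proof, since it follows by exactly the bookkeeping you describe---multiplying $L$ by $K$ and absorbing the polylogarithmic factors into $\tilde{\mathcal{O}}$. Your handling of the $\log(4L/\delta)$ term inside $K$ is also correct and is the only point that merits a sentence of justification.
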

Hence, if $B_1+B_2 = \mathcal{O}(|\mathcal{S}||\mathcal{A}|)$, then AsyncQVI has a near-optimal sample complexity.
%

Moreover, given the complete knowledge of transition $\mathrm{P}$ and reward $\mathrm{r}$, we can also solve it asynchronous parallelly. To utilize AsyncQVI, one can build a \emph{generative model} in $\tilde{O}(|\mathcal{S}|^2|\mathcal{A}|)$ prepossessing time~\citep{wang2017randomized}, and the \texttt{GM} produces a sample in $\tilde{O}(1)$ arithmetic operations. In this sense, AsyncQVI also has the following computational complexity results.
\begin{corollary}[Computational Complexity]\label{e_policy_corollary2}
Given a DMDP $(\mathcal{S}, \mathcal{A}, \mathrm{P}, \mathrm{r}, \gamma)$, under Assumption~\ref{assump} AsyncQVI returns an $\varepsilon$-optimal policy with probability at least $1-\delta$ at the computational complexity
$$\tilde{\mathcal{O}}\big(|\mathcal{S}|^2|\mathcal{A}|+\frac{|\mathcal{S}||\mathcal{A}|}{(1-\gamma)^5\varepsilon^2}\log(\frac{1}{\delta})\big),$$
provided that $B_1+B_2=\mathcal{O}(|\mathcal{S}||\mathcal{A}|)$.
\end{corollary}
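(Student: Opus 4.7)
The plan is to translate the sample-complexity bound of Corollary~\ref{e_opt_policy} into an arithmetic-complexity bound. Two ingredients are needed: a cheap implementation of \texttt{GM}, and a per-iteration accounting that does not inflate the total.

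First I would invoke the preprocessing scheme of \citet{wang2017randomized}: starting from the full tables $(\mathrm{P},\mathrm{r})$, one can build, in $\tilde{O}(|\mathcal{S}|^2|\mathcal{A}|)$ arithmetic operations, an alias-table-style data structure (one table per conditional distribution $\mathbf{p}_i^a$, alongside the $|\mathcal{S}|$ reward entries $r_{ij}^a$) that answers each \texttt{GM}$(i,a)$ query, returning a pair $(j,r_{ij}^a)$ with the correct joint law, in $\tilde{O}(1)$ time. Because this \texttt{GM} matches exactly the distribution assumed in Theorem~\ref{e_policy}, the correctness guarantee (an $\varepsilon$-optimal policy with probability at least $1-\delta$) carries over intact.

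Next I would tally the per-iteration cost of Algorithm~\ref{AsyncQVI}: the $K$ \texttt{GM} calls at Line~\ref{alg_sampling} take $\tilde{O}(K)$, the averaging at Line~\ref{alg_q} takes $O(K)$, and the conditional write at Lines~\ref{alg_if}--\ref{alg_unlock} takes $O(1)$. The snapshot at Line~\ref{alg_copy} naively costs $O(|\mathcal{S}|)$ per iteration. Following the Parallel Overhead remark, I would refresh $\hat{\mathbf{v}}$ only once every $l_0=\lceil 1/(1-\gamma)\rceil$ iterations, inflating $B_2$ by $l_0$ while amortizing the copy cost down to $O((1-\gamma)|\mathcal{S}|)$ per iteration. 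Since one may assume $1/(1-\gamma)=\tilde{O}(|\mathcal{S}||\mathcal{A}|)$ (otherwise the problem is trivial), the hypothesis $B_1+B_2=\mathcal{O}(|\mathcal{S}||\mathcal{A}|)$ is preserved.

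Finally I would combine. By Theorem~\ref{err_Q}, $L=\tilde{O}\bigl((B_1+B_2)/(1-\gamma)\bigr)=\tilde{O}\bigl(|\mathcal{S}||\mathcal{A}|/(1-\gamma)\bigr)$ and $K=\tilde{O}\bigl(\log(1/\delta)/[(1-\gamma)^4\varepsilon^2]\bigr)$. Adding preprocessing, amortized copying, and sampling,
\[
\tilde{O}(|\mathcal{S}|^2|\mathcal{A}|)+L\cdot\tilde{O}\bigl((1-\gamma)|\mathcal{S}|\bigr)+L\cdot\tilde{O}(K)=\tilde{O}\!\Bigl(|\mathcal{S}|^2|\mathcal{A}|+\tfrac{|\mathcal{S}||\mathcal{A}|}{(1-\gamma)^5\varepsilon^2}\log\tfrac{1}{\delta}\Bigr),
\]
since the middle term collapses to $\tilde{O}(|\mathcal{S}|^2|\mathcal{A}|)$ and the last term reproduces the sample count in Corollary~\ref{e_opt_policy}. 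The only real obstacle is the accounting for the snapshot copy: a naive per-iteration copy would contribute an extra $1/(1-\gamma)$ factor, and balancing $l_0$ against the preprocessing term is what eliminates it.
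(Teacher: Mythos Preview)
Your proposal is correct in structure and in fact more careful than the paper's own justification. The paper's entire argument is the sentence preceding the corollary: build the generative model of \citet{wang2017randomized} in $\tilde{O}(|\mathcal{S}|^2|\mathcal{A}|)$ preprocessing time, after which each \texttt{GM} call costs $\tilde{O}(1)$, so the arithmetic cost is the preprocessing plus the sample count from Corollary~\ref{e_opt_policy}. The paper does not separately account for the $O(|\mathcal{S}|)$ snapshot copy at Line~\ref{alg_copy}; you do, via the less-frequent-refresh trick from the Parallel Overhead remark, and this is a genuine refinement over what the paper writes down.

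One step in your accounting is shaky, though. You defend the choice $l_0=\lceil 1/(1-\gamma)\rceil$ by asserting that one may assume $1/(1-\gamma)=\tilde{O}(|\mathcal{S}||\mathcal{A}|)$ because ``otherwise the problem is trivial.'' That is not true: a large effective horizon $1/(1-\gamma)$ makes the problem \emph{harder}, not trivial, and nothing in the setup ties $\gamma$ to $|\mathcal{S}||\mathcal{A}|$. The easy fix is to take $l_0=\Theta(|\mathcal{S}||\mathcal{A}|)$ instead. Then $B_2+l_0=O(|\mathcal{S}||\mathcal{A}|)$ holds unconditionally, the amortized copy cost becomes $O(|\mathcal{S}|/l_0)=O(1/|\mathcal{A}|)$ per iteration, and the total copy contribution is $L\cdot O(1/|\mathcal{A}|)=\tilde{O}\bigl(|\mathcal{S}|/(1-\gamma)\bigr)$. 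Since the input constraint $\varepsilon<(1-\gamma)^{-1}$ forces $(1-\gamma)^{-4}\varepsilon^{-2}\ge(1-\gamma)^{-2}$, this quantity is always dominated by the second term $\tilde{O}\bigl(|\mathcal{S}||\mathcal{A}|(1-\gamma)^{-5}\varepsilon^{-2}\log(1/\delta)\bigr)$, and your conclusion follows without any side assumption on $\gamma$.
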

\section{Numerical Experiments}
\subsection{Sailing Problem}
To investigate the performance of AsyncQVI, we solve the sailing problem from \citet{sailing} on a $100\times 100$ grid with 80000 states and 8 actions. Each state contains the sailor's current position $(x,y)$ and the wind direction. Each action is one of the eight directions
$\{(0,1),$ $(0,-1),$ $(1,0),$ $(-1,0),$ $(1,1),$ $(1,-1),$ $(-1,1),$ $(-1,-1)\}$.
The goal is to reach the target position $(50, 50)$ at the lowest cost. Different from the original settings, we add more randomness to the system. Under the action $(\delta_x, \delta_y) $, the sailor will be further affected by two drift noises: a mild wind noise $\mathcal{N}(0, \sigma_1^2)$ which occurs with probability (w.p.) 1 and a big vortex noise  $\mathcal{N}(0,\sigma_2^2)$ which occurs with a fairly small probability $p$. So, the next position is $(x+\delta_x+\mathcal{N}(0,\sigma_1^2), ~y+\delta_y+\mathcal{N}(0,\sigma_1^2))$ w.p. $1-p$ and $(x+\delta_x+\mathcal{N}(0,\sigma_1^2+\sigma_2^2), ~y+\delta_y+\mathcal{N}(0,\sigma_1^2+\sigma_2^2))$ w.p. $p$.
The wind direction at next time maintains its current direction w.p. 0.3, changes 45 degrees to either direction w.p. 0.2 each direction, changes 90 degrees to either direction w.p. 0.1 each, changes 135 degrees to either direction w.p. 0.04 each, and reverses direction w.p. 0.02. We set the instant reward as
$d \times \vert\frac{\text{angle between wind and action directions}}{45}\vert$,
where $d$ is a constant hyperparameter. When the reward is lower, we can take it as a higher cost. If the sailor reaches the target position, the reward is 1.

\subsection{Implementation}
We compare five algorithms with a sample oracle ($\mathcal{SO}$): AsyncQVI, Asynchronous-Parallel Q-learning with constant stepsize (AQLC), Asynchronous-Parallel Q-learning with diminishing stepsize (AQLD)\citep{Tsitsiklis1994},  Variance-reduced Value Iteration (VRVI)\citep{sidford2018variance}, and Variance-reduced Q-value Iteration (VRQVI)\citep{sidford2018near}. All algorithms and the $\mathcal{SO}$ are implemented in C++11.
We use the \texttt{thread} class and \texttt{pthread.h} for parallel computing.

The tests were performed with 20 threads running on two 2.5GHz 10-core Intel Xeon E5-2670v2 processors. We chose the optimal sample method (uniformly random, cyclic, Markovian sampling) and optimal hyperparameters (sample number, iteration number, learning rate, exploration rate) for each algorithm individually. The learning rate of AQLD was set as $1/t^{0.51}$ according to its  theoretical analysis, where $t$ is the iteration number. Our code is available in 
\href{https://github.com/uclaopt/AsyncQVI}{\texttt{https://github.com/uclaopt/AsyncQVI}}.

\begin{figure*}
    \centering
    \includegraphics[width=.825\textwidth]{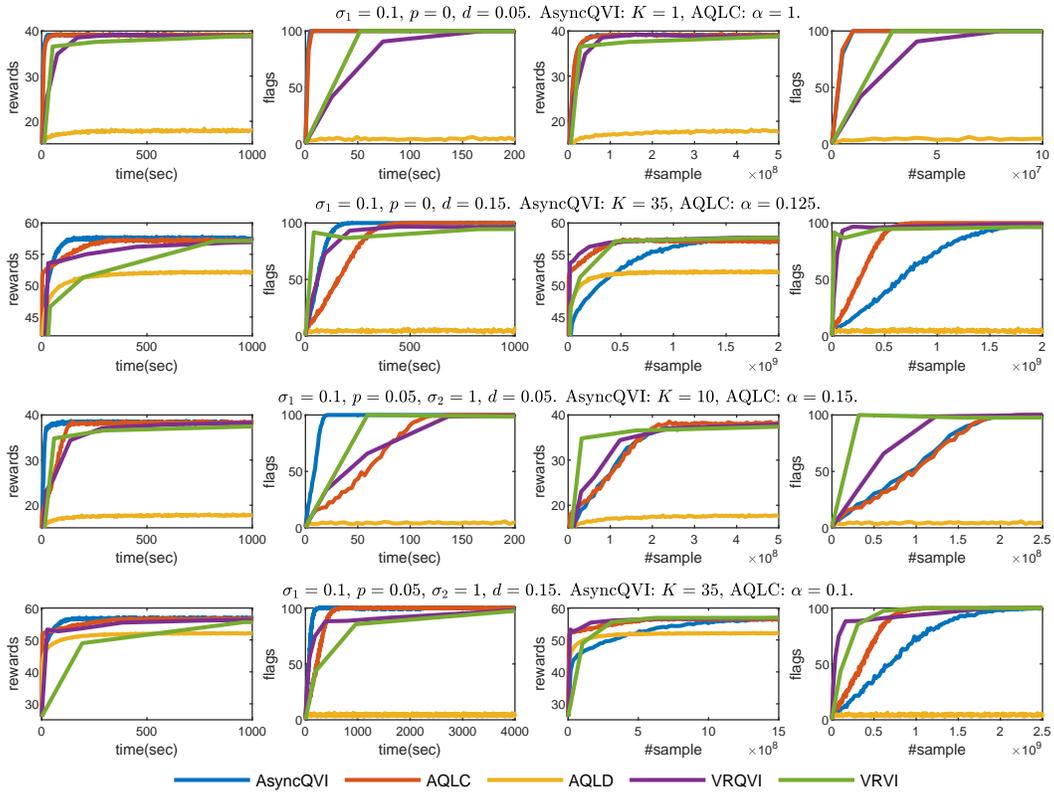}
    \vspace{2em}
    \caption{Performance Comparison Under Various Settings.}
    \label{fig1}
\end{figure*}
\begin{figure*}
    \centering
    \vspace{2em}
    \includegraphics[width=.825\textwidth]{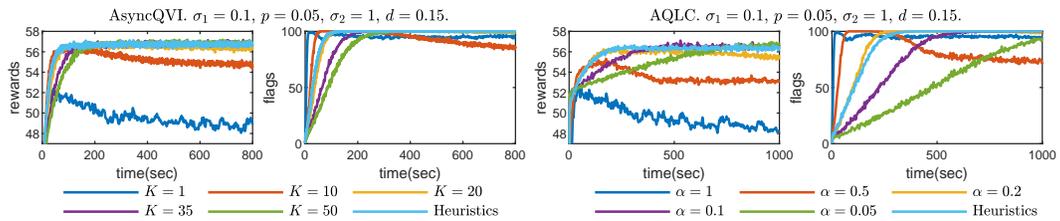}
    \vspace{2em}
    \caption{Performance Comparison With Different Sample Numbers Or Learning Rates.}
    \label{fig2}
\end{figure*}
\begin{figure*}
  \centering
  \vspace{2em}
  \includegraphics[width=.45\textwidth]{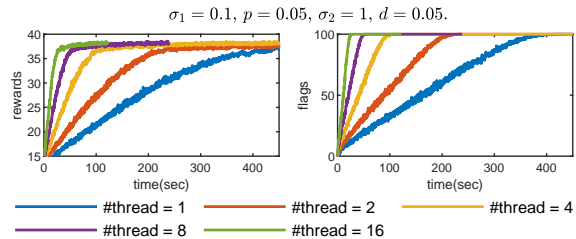}
  \vspace{2em}
  \caption{Parallel Performance Of AsyncQVI.}
  \label{parallel}
\end{figure*}

\subsection{Policy Evaluation}
Given a policy, we let the agent start from a random initial state and take actions following the policy for 200 steps. Then, we evaluate the policy by recording the total discounted rewards ($\gamma=0.99$) and whether the agent reaches the target position (flag $=1$ if so). We repeat 100 episodes of this process and calculate the average total discounted rewards and total flags. A policy with higher rewards and more flags is preferred.

We test with different randomness and rewards which represent various MDP settings (see Figure~\ref{fig1}). In the first test, one-step transition rewards are dominated by rewards for reaching the target  ($d=0.05$ is very small compared with 1) and only the wind noise is considered in positioning. The agent mainly aims at finding the target, which is relatively easy with minor noises. This leads to a fast convergence of policies with low sampling request and bold learning rate. In the second test, with increasing transition rewards ($d=0.15$), the agent needs to take a more economical way to reach the goal. This prolongs the learning process with more samples and more prudent learning rate. The next two tests make the situation more complicated with a big vortex noise, which gives rise to higher sampling numbers and more conservative learning rates. This phenomenon occurs in VRVI and VRQVI as well. We skip the detailed parameters here.

In these four tests, AsyncQVI and AQLC are almost equivalently outstanding in terms of time and achieve an at least $10\times$ speedup compared to VRQVI and VRVI with 20 threads running parallel. Further, VRQVI and VRVI have lower sample complexities, especially on complicated cases. The testing results verify our theory. In the sequel, we further analyze the performance of AsyncQVI and AQLC and provide heuristics on how to set sample number and learning rate.

\subsection{Performance Analysis and Heuristics}
Recall that AsyncQVI derives from the Q-value operator $T$ (see Eq.~\eqref{Q_value_operator}). Let $T_\alpha:= (1-\alpha)I + \alpha T$, where $\alpha$ is the learning rate. One can get AQLC through the same approach. What's special is, AQLC only takes one sample each time. This seems to be a very inaccurate approximation and might cause devastating error. However, note that when applying $T_\alpha$, sample range is also discounted by $\alpha$. For fixed $\delta$ and $\epsilon$, the requested sample number $m$ decreases quadratically with respect to $\alpha$, since $m\geq C\frac{\alpha^2}{\epsilon^2}\log\big(\frac{1}{\delta}\big)$ by Hoeffeding¡¯s Inequality~\citep{hoeffding1963probability}. Hence, when $\alpha$ is smaller, AQLC converges more stably. On the other hand, a tiny learning rate also leads to slow progress, since $T_\alpha$'s contractive factor $(1-\alpha+\alpha\gamma)$ approaches $1$. Similarly, for AsyncQVI, when the sample number $K$ is larger, it converges more stably but also more slowly. Therefore, we propose a trade-off heuristic of adaptively increasing the sample number or decreasing the learning rate. Specifically, in our test, we set $K_t=\min(\lfloor t^{0.175} \rfloor, 35)$ for AsyncQVI and $\alpha_t = \max(t^{-0.1}, 0.1)$ for AQLC, where $t$ is the iteration number. The results are depicted in Figure~\ref{fig2}.

The above interpretation also shows that AQLC is a special case of AsyncQVI (with $T_{\alpha}$ and $K=1$), which explains the similarity in their optimal performances. However, since AsyncQVI takes $\frac{1}{|\mathcal{A}|}\times$ memory of AQLC, our algorithm is still preferable for high dimensional applications.

\subsection{Parallel Performance}
We also test the parallel speedup performance of AsyncQVI using 1, 2, 4, 8, and 16 threads (see Figure \ref{parallel}). The result shows an ideal linear speedup.

\subsection{Summary}
AsyncQVI and AQLC have similar numerical performance, and they are faster than VRQVI, VRVI and AQLD. In general, async algorithms speed and scale up very well as the number of threads increases, and AsyncQVI is not an exception. On the other hand, AsyncQVI requires only $\mathcal{O}(|\mathcal{S}|)$ memory, which is much less than the $\mathcal{O}(|\mathcal{S}||\mathcal{A}|)$ memory of the other three; recall Table~\ref{asyn_compare}. Therefore, AsyncQVI can solve much larger problem instances.

\section{Conclusions and Future Work}\label{sec_conclusion}
In this paper, we propose an async-parallel algorithm AsyncQVI. Under mild asynchronism conditions, our algorithm achieves near-optimal sample complexity and minimal memory requirement. To the best of our knowledge, AsyncQVI is the first async-parallel algorithm for DMDPs with a generative model that has an explicit sample complexity.

For future work, we plan to integrate function approximation and policy exploration. Recently, a line of work established sample complexity results in this direction, e.g., \cite{yang2019sample, yang2019reinforcement, chen2018scalable}. We will also consider extending to continuous cases.



\subsubsection*{Acknowledgements}
This work of Yibo Zeng is partially supported by Fudan University Exchange Program and UCEAP Reciprocal Program for his
short-term visit to UCLA. The work of Fei Feng and Wotao Yin is partially supported by National Key R\&D Program of China 2017YFB02029 and NSF DMS-1720237.

\bibliography{AsyncQVI_AISTATS}

\newpage
\onecolumn
\begin{center}
    \textbf{\Large Supplementary Material}
\end{center}

\begin{appendices}
\section{Missing Proof of Proposition~\ref{Cov_Rate2}}

Recall that for each iteration $t$, we update the $i$th coordinate $x_i$ if $t\in\mathscr{T}^{i}$. Specifically,
\begin{equation}\label{update}
  x_{i}(t+1) =
  \begin{cases}
   G_i(\hat{\mathbf{x}}(t)),
                    & \hbox{$t\in \mathscr{T}^{i}$;} \\
      x_{i}(t),   & \hbox{$t\notin  \mathscr{T}^{i}$,}
  \end{cases}
\end{equation}
where $\hat{\mathbf{x}}(t):=[x_{1}(\tau_{1}(t)), \ldots,
      x_{n}(\tau_{n}(t))]^\top$.
For analysis, we sort $\mathscr{T}^i$ into a sequence $(t^i_k)_{k\geq 0}$, where $t_0^{i}$ is the first element of $\mathscr{T}^{i}$ and $t_k^{i}$ is the $(k+1)$th. Then Theorem~\ref{Cov_Rate} bounds $|x_{i}(t) - x^*_{i}|$ in a staircase decreasing way: $|x_{i}(t) - x^*_{i}|$
will contract when $t\in\mathscr{T}^{i}$, or equivalently, $t=t_k^i$ for some $k$.

\begin{theorem}[Staircase Decreasing]\label{Cov_Rate}
  Consider the iteration~\eqref{update} under Assumption~\ref{assump}. Suppose that $G$ is $\gamma$-contractive under infinity norm and $\mathbf{x}^*$ is the fixed-point of $G$. For each $t\geq B_1$ and $i\in\{1, 2, \cdots, n\}$, if $t\in(t_k^i, t_{k+1}^i]$ for some $k$, then $x_i(t)$ satisfies
  \begin{equation}\label{CR1}
    |x_{i}(t) - x^*_{i}| \leq \Vert \mathbf{x}(0) - \mathbf{x}^* \Vert_\infty \rho^{t_k^i-B_1},
  \end{equation}
  where $\rho := \gamma^{\frac{1}{B_1+B_2-1}}$.
\end{theorem}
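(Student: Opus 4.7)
The plan is to prove Theorem~\ref{Cov_Rate} by strong induction on $t$, after first establishing a uniform a priori bound. Set $D_0 := \Vert\mathbf{x}(0) - \mathbf{x}^*\Vert_\infty$. I would first show by a quick induction on $t$ that $\Vert\mathbf{x}(t) - \mathbf{x}^*\Vert_\infty \leq D_0$ for every $t \geq 0$: the base case is the definition, and in the step either $t \notin \mathscr{T}^i$ preserves the bound trivially, or $t \in \mathscr{T}^i$ and $\gamma$-contractivity combined with the inductive bound $\Vert\hat{\mathbf{x}}(t) - \mathbf{x}^*\Vert_\infty \leq D_0$ gives $|x_i(t+1) - x_i^*| \leq \gamma D_0 \leq D_0$.

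For the staircase estimate itself I would induct strongly on $t$. Fix $i$, $t$, and $k$ with $t \in (t_k^i, t_{k+1}^i]$, so that $x_i(t) = G_i(\hat{\mathbf{x}}(t_k^i))$. If $t_k^i < B_1$ then $D_0 \rho^{t_k^i - B_1} \geq D_0$ and the uniform bound from Step~1 already delivers the claim. Otherwise, $\gamma$-contractivity yields $|x_i(t) - x_i^*| \leq \gamma \max_j |x_j(\tau_j(t_k^i)) - x_j^*|$. For each coordinate $j$, let $k_j$ index the last update of $j$ strictly before $\tau_j(t_k^i)$, so $\tau_j(t_k^i) \in (t_{k_j}^j, t_{k_j+1}^j]$. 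Assumption~\ref{assump}(a) applied at time $t_{k_j}^j + 1$ forces $t_{k_j+1}^j \leq t_{k_j}^j + B_1$ and hence $t_{k_j}^j \geq \tau_j(t_k^i) - B_1$; Assumption~\ref{assump}(b) gives $\tau_j(t_k^i) > t_k^i - B_2$, so $t_{k_j}^j \geq t_k^i - B_1 - B_2 + 1$. The inductive hypothesis, legal because $\tau_j(t_k^i) \leq t_k^i < t$, then bounds $|x_j(\tau_j(t_k^i)) - x_j^*| \leq D_0 \rho^{t_{k_j}^j - B_1} \leq D_0 \rho^{t_k^i - 2B_1 - B_2 + 1}$. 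Multiplying by $\gamma = \rho^{B_1 + B_2 - 1}$ recovers exactly $D_0 \rho^{t_k^i - B_1}$.

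The main obstacle is the boundary bookkeeping: some delayed read $\tau_j(t_k^i)$ may land in a regime where either no prior update of $j$ exists or the corresponding $t_{k_j}^j$ is smaller than $B_1$. In those cases the inductive template returns an exponent $\leq 0$, which merely reproduces the uniform bound from Step~1; since $\rho \leq 1$, this is consistent with the desired staircase inequality. Once that bookkeeping is handled, the whole argument collapses to the single algebraic identity $\gamma \cdot \rho^{-(B_1+B_2-1)} = 1$, which is precisely how $\rho$ is defined: the cumulative exponent loss $B_1 + B_2 - 1$ from ``schedule gap plus communication delay'' is cancelled by one contraction factor $\gamma$. Proposition~\ref{Cov_Rate2} then drops out by noting that Assumption~\ref{assump}(a) forces $t_k^i \geq t - B_1$ for the most recent update to coordinate $i$.
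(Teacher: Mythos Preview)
Your proposal is correct and follows essentially the same approach as the paper's proof: both establish the uniform bound $\Vert\mathbf{x}(t)-\mathbf{x}^*\Vert_\infty\le D_0$ first, then run an induction in which Assumption~\ref{assump}(a) yields $t_{k_j}^j\ge \tau_j-B_1$, Assumption~\ref{assump}(b) yields $\tau_j\ge t_k^i-B_2+1$, and the identity $\gamma=\rho^{B_1+B_2-1}$ closes the exponent. Your organization differs only cosmetically (you fix $t$ and trace back to $t_k^i$, whereas the paper steps from $t'$ to $t'+1$ and splits on whether $t'\in\mathscr{T}^i$), and you are actually more explicit than the paper about the boundary regime where $\tau_j(t_k^i)$ precedes the first update of $j$ or lands below $B_1$.
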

\begin{proof}
We first claim that for each $t\geq B_1$ and $i\in\{1, 2, \cdots, n\}$, there exists some $k\geq 0$ such that $t\in(t_k^i, t_{k+1}^i]$. This follows from Assumption~\ref{assump} (a), where $t_0^i\leq B_1-1$, $\forall~i$.

Now we prove Eq.~\eqref{CR1} by induction. One could check
\begin{equation*}
\Vert \mathbf{x}(t) - \mathbf{x}^*\Vert_\infty
  \leq \Vert \mathbf{x}(0) - \mathbf{x}^* \Vert_\infty,~\forall~t \geq 0
\end{equation*}
as a corollary of~\citep[Theorem 2]{feyzmahdavian2014convergence} or by another induction. We skip the details here. Thus for the basic case,
\begin{equation*}
        \max_{0\leq t\leq B_1}
        \big\{\Vert \mathbf{x}(t) - \mathbf{x}^*\Vert_\infty\rho^{-t}\big\}
\leq    \max_{0\leq t\leq B_1}
        \big\{\Vert \mathbf{x}(0) - \mathbf{x}^*\Vert_\infty\rho^{-t}\big\}
\leq    \Vert \mathbf{x}(0) - \mathbf{x}^* \Vert_\infty \rho^{-B_1},
\end{equation*}
which gives that for each $t\leq B_1$ and $i\in\{1, 2, \cdots, n\}$,
\begin{equation*}
  |x_{i}(t) - x^*_{i}|
  \leq \Vert \mathbf{x}(0) - \mathbf{x}^* \Vert_\infty \rho^{t-B_1}.
\end{equation*}
Since $\rho^t$ is decreasing, we can further obtain that
\begin{equation*}
|x_{i}(B_1) - x^*_{i}|
 \leq \Vert \mathbf{x}(0) - \mathbf{x}^* \Vert_\infty \rho^{t_k^i-B_1},
\end{equation*}
if $B_1\in(t_k^i, t_{k+1}^i]$ for some $k$.

For the induction step, we assume that Eq.~\eqref{CR1} holds for all $t\geq B_1$ up to some $t^\prime$. For a fixed $i\in\{1, 2,\cdots, n\}$, supposing that $t^\prime\in(t_{k^\prime}^i, t_{k^\prime+1}^i]$ for some $k^\prime$, then we analyze the scenario at $(t^\prime+1)$ as two cases.

\noindent \textbf{Case 1:} $t^\prime \notin \mathscr{T}^{i}$, i.e., we do not update coordinate $i$ at iteration $t^\prime$. Hence, $x_{i}(t^\prime + 1) = x_{i}(t^\prime)$ and $t^\prime +1 \in (t^{i}_{k^\prime}, t^{i}_{k^\prime + 1}]$. Then Eq.~\eqref{CR1}  follows directly.

\noindent \textbf{Case 2:}  $t^\prime \in \mathscr{T}^{i}$, i.e., the $i$th coordinate is updated at iteration $t^\prime$ and $t^\prime = t^{i}_{k^\prime + 1}$. Since $G$ is $\gamma$-contractive under infinity norm, we have
\begin{equation}\label{Cov_Rate_Eq4}
\begin{aligned}
      |x_{i}(t^\prime+1) - x_{i}^*|
=     |G_{i}(\hat{\mathbf{x}}(t)) - x^*_{i}|
\leq&~\Vert G(\hat{\mathbf{x}}(t)) - \mathbf{x}^* \Vert_\infty\\
\leq&~\gamma \max_{j}
      \big|x_{j}(\tau_{j}(t^\prime))- x^*_{j}\big|.
\end{aligned}
\end{equation}
For a fixed $j\in\{1, 2,\cdots, n\}$, suppose that $\tau_{j}(t^\prime)
  \in (t^{j}_{k_\tau}, t^{j}_{k_\tau+1}]$ for some $k_\tau$.
Then the induction hypothesis gives
$ \big|x_{j}(\tau_{j}(t^\prime)) - x_{j}^*\big|
  \leq \Vert \mathbf{x}(0) - \mathbf{x}^* \Vert_\infty
        \rho^{t_{k_\tau}^{j}-B_1}.$
Since $\tau_{j}(t^\prime)
\leq t_{k_\tau}^{j}+B_1$ by Assumption~\ref{assump} (a) and $\tau_{j}(t^\prime) \geq t^\prime-B_2+1$ by Assumption~\ref{assump} (b), we obtain
\begin{align}
        \gamma \big|x_{j}(\tau_{j}(t^\prime)) - x_{j}^*\big|
  \leq&~\gamma \Vert \mathbf{x}(0) - \mathbf{x}^* \Vert_\infty
        \rho^{t_{k_\tau}^{j}-B_1}\nonumber\\
  \leq&~\gamma\Vert \mathbf{x}(0) - \mathbf{x}^* \Vert_\infty
        \rho^{\tau_{j}(t^\prime) - 2B_1}\nonumber\\
  \leq&~\gamma\Vert \mathbf{x}(0) - \mathbf{x}^* \Vert_\infty
        \rho^{t^\prime-2B_1-B_2-1}\nonumber\\
  =&~   \Vert \mathbf{x}(0) - \mathbf{x}^* \Vert_\infty
        \rho^{t^{i}_{k^\prime + 1}-B_1},\label{Cov_Rate_Eq5}
\end{align}
where the equality holds since $\gamma = \rho^{B_1+B_2-1}$ by definition and $t^\prime = t^{i}_{k^\prime + 1}$. Notice that $t^\prime + 1\in (t^{i}_{k^\prime + 1}, t^{i}_{k^\prime + 2}]$. Inserting Eq.~\eqref{Cov_Rate_Eq5} back into Eq.~\eqref{Cov_Rate_Eq4} yields the desired result.
\end{proof}

One may note that if $t\in(t_k^i, t_{k+1}^i]$, then
$t_k^i + B_1  \geq t$ by Assumption~\ref{assump} (a).
Hence, Proposition~\ref{Cov_Rate2} is a direct consequence of Theorem~\ref{Cov_Rate}.

\section{Missing Proof from Section 4.1}
To analyze the sampling error, we first review Hoeffeding's Inequality~\citep{hoeffding1963probability}.

\begin{lemma}[Hoeffeding's Inequality \citep{hoeffding1963probability}]\label{Hof}
Let $X_1,\cdots,X_m$
be i.i.d real valued random variables with $X_j\in[a_j,b_j]$ and  $Y=\frac{1}{m}\sum_{j=1}^m X_j$. For all $\varepsilon\geq 0$,
\begin{equation*}
\mathbb{P}\big[\big|Y-\mathbb{E}[Y]\big|\geq \varepsilon\big]
\leq2e^{\frac{-2m^2\varepsilon^2}{\sum_{j=1}^m(b_j-a_j)^2}}.
\end{equation*}
\end{lemma}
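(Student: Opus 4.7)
The plan is to prove Hoeffding's inequality by the classical Chernoff-bound technique applied to the moment generating function. First, I would reduce to a one-sided bound: by the union bound, $\mathbb{P}[|Y - \mathbb{E}[Y]| \geq \varepsilon] \leq \mathbb{P}[Y - \mathbb{E}[Y] \geq \varepsilon] + \mathbb{P}[-(Y - \mathbb{E}[Y]) \geq \varepsilon]$, and the second term is handled by applying the argument below to the variables $-X_j$. So I may focus on bounding the upper tail and double the result at the end, which accounts for the factor $2$ in the stated inequality.

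For the upper tail, I would introduce a free parameter $s > 0$ and apply Markov's inequality to the monotone transformation $x \mapsto e^{sx}$:
\begin{equation*}
\mathbb{P}\bigl[Y - \mathbb{E}[Y] \geq \varepsilon\bigr]
= \mathbb{P}\bigl[e^{s \sum_j (X_j - \mathbb{E}[X_j])} \geq e^{sm\varepsilon}\bigr]
\leq e^{-sm\varepsilon}\, \mathbb{E}\bigl[e^{s \sum_j (X_j - \mathbb{E}[X_j])}\bigr].
\end{equation*}
Because the $X_j$ are independent, the MGF factorizes as $\prod_{j=1}^m \mathbb{E}[e^{s(X_j - \mathbb{E}[X_j])}]$, reducing the problem to controlling each factor individually.

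The main obstacle is bounding the MGF of a single centered bounded random variable. Here I would invoke (or prove) \emph{Hoeffding's lemma}: if $Z$ has $\mathbb{E}[Z] = 0$ and $Z \in [\alpha, \beta]$ a.s., then $\mathbb{E}[e^{tZ}] \leq e^{t^2(\beta - \alpha)^2/8}$ for all $t \in \mathbb{R}$. The lemma follows by writing $z = \lambda \beta + (1 - \lambda)\alpha$ with $\lambda = (z - \alpha)/(\beta - \alpha)$, using the convexity of $e^{tz}$ to get a linear upper bound, taking expectations (which eliminates the linear term in $Z$ since $\mathbb{E}[Z] = 0$), and finally showing via a second-derivative-in-$t$ argument on the resulting cumulant generating function that it is bounded by the quadratic $t^2(\beta-\alpha)^2/8$. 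Applying the lemma with $Z = X_j - \mathbb{E}[X_j]$, whose range has length exactly $b_j - a_j$, yields $\mathbb{E}[e^{s(X_j - \mathbb{E}[X_j])}] \leq e^{s^2(b_j - a_j)^2/8}$.

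Combining these ingredients gives
\begin{equation*}
\mathbb{P}\bigl[Y - \mathbb{E}[Y] \geq \varepsilon\bigr]
\leq \exp\!\Bigl(-sm\varepsilon + \tfrac{s^2}{8} \textstyle\sum_{j=1}^m (b_j - a_j)^2\Bigr).
\end{equation*}
The last step is to optimize the resulting quadratic in $s$. The minimizer is $s^\star = 4m\varepsilon / \sum_j (b_j - a_j)^2$, and substituting back produces the exponent $-2m^2\varepsilon^2/\sum_j(b_j-a_j)^2$. Doubling to account for the lower tail yields the stated bound. None of these steps requires the i.i.d.\ hypothesis beyond independence and the a.s.\ boundedness, so the proof goes through as written.
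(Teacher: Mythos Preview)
Your proof is correct and is precisely the standard Chernoff--bound argument for Hoeffding's inequality. Note, however, that the paper does not actually prove this lemma: it is stated with a citation to \citet{hoeffding1963probability} and used as a black box in the subsequent concentration arguments. So there is no ``paper's own proof'' to compare against; you have simply supplied the classical derivation where the paper chose to cite it. One minor remark: you correctly observe that only independence (not the identical-distribution part of the i.i.d.\ hypothesis) is used, which is consistent with the general form of Hoeffding's result.
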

By Hoeffeding's Inequality, the error between the sample averages and the true expectations can be controlled with enough number of samples. Specifically, we have:
\begin{lemma}\label{SamCon_lem}
Given a constant $L$, with
$K = \big\lceil\frac{8}{(1-\gamma)^4\varepsilon^2}\log(\frac{4L}{\delta})\big\rceil$
samples, AsyncQVI returns $r(t)$ and $S(\hat{\mathbf{Q}}(t))$ satisfying
\begin{equation*}
\vert r(t) - \bar{r}_{i_t}^{a_t}\vert \leq \frac{(1-\gamma)^2\varepsilon}{4}, \qquad
  \vert S(\hat{\mathbf{Q}}(t))- {\mathbf{p}_{i_t}^{a_t}}^\top\hat{\mathbf{v}}(t)\vert
  \leq\frac{(1-\gamma)\varepsilon}{4}
\end{equation*}
with probability at least $1-\frac{\delta}{L}$.
\end{lemma}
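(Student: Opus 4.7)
The plan is a direct application of Hoeffding's Inequality (Lemma~B.1) to the two empirical averages $r(t)$ and $S(\hat{\mathbf{Q}}(t))$ formed on Lines~\ref{alg_sampling}--\ref{alg_q} of Algorithm~\ref{AsyncQVI}, followed by a simple union bound. The only subtlety is conditioning correctly so that the $K$ samples really are i.i.d.\ with the right expectation.

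First I would fix the iteration $t$ and condition on the local snapshot $\hat{\mathbf{v}}$ (which was copied on Line~\ref{alg_copy}) and on the chosen state-action pair $(i_t,a_t)$. Conditional on this information, the $K$ calls to \texttt{GM}$(i_t,a_t)$ on Line~\ref{alg_sampling} return i.i.d.\ next-state/reward pairs $(s'_k,r_k)$ where $s'_k=j$ with probability $p_{i_tj}^{a_t}$ and $r_k=r_{i_ts'_k}^{a_t}$. Hence the $r_k$ are i.i.d.\ with mean $\bar{r}_{i_t}^{a_t}$ and support in $[0,1]$, and the $\hat{v}_{s'_k}$ are i.i.d.\ with mean ${\mathbf{p}_{i_t}^{a_t}}^\top\hat{\mathbf{v}}(t)$ and support in $[0,(1-\gamma)^{-1}]$ (using the standing assumption $r_{ij}^a\in[0,1]$, which gives $\|\mathbf{v}\|_\infty,\|\hat{\mathbf{v}}\|_\infty\le (1-\gamma)^{-1}$; I would state this bound explicitly).

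Second, I would apply Lemma~B.1 to each average separately. For $r(t)=\tfrac{1}{K}\sum_k r_k$, with range $b_k-a_k=1$ and deviation $\tfrac{(1-\gamma)^2\varepsilon}{4}$, Hoeffding gives failure probability at most $2\exp\!\bigl(-\tfrac{K(1-\gamma)^4\varepsilon^2}{8}\bigr)$. For $S(\hat{\mathbf{Q}}(t))=\tfrac{1}{K}\sum_k\hat{v}_{s'_k}$, with range $b_k-a_k=(1-\gamma)^{-1}$ and deviation $\tfrac{(1-\gamma)\varepsilon}{4}$, the exponent works out to the same quantity $-\tfrac{K(1-\gamma)^4\varepsilon^2}{8}$ (this cancellation is what makes the two bounds line up cleanly). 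Plugging in $K=\lceil\tfrac{8}{(1-\gamma)^4\varepsilon^2}\log(4L/\delta)\rceil$ makes each failure probability at most $2\cdot\tfrac{\delta}{4L}=\tfrac{\delta}{2L}$.

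Third, a union bound over the two events yields overall failure probability at most $\delta/L$, as claimed. Since the bound on the conditional probability does not depend on the conditioning variables $(\hat{\mathbf{v}},i_t,a_t)$, taking expectation over them preserves the unconditional bound. There is no real obstacle here; the only thing to be careful about is the bookkeeping that (i) the local copy $\hat{\mathbf{v}}$ is frozen during the $K$ queries so the samples really are i.i.d.\ with the claimed means, and (ii) the $[0,(1-\gamma)^{-1}]$ range for $\hat{v}_j$ must be justified from the normalization $r_{ij}^a\in[0,1]$ and, if needed, an easy induction showing the algorithm's iterates stay in this interval.
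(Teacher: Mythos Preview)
Your proposal is correct and follows essentially the same approach as the paper: apply Hoeffding's inequality separately to the empirical means $r(t)$ and $S(\hat{\mathbf{Q}}(t))$ using the ranges $[0,1]$ and $[0,(1-\gamma)^{-1}]$ respectively, observe that both yield the same exponent $-K(1-\gamma)^4\varepsilon^2/8$, plug in $K$, and union bound. Your treatment is in fact slightly more careful than the paper's, as you make explicit the conditioning on $(\hat{\mathbf{v}},i_t,a_t)$ and the induction needed for the $[0,(1-\gamma)^{-1}]$ range, both of which the paper only mentions in passing.
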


\begin{proof}
As we explained before, both $r(t)$ and $S(\hat{\mathbf{Q}}(t))$ are averages of $K$ i.i.d. samples with $\mathbb{E}[r(t)] =\sum_j p_{i_tj}^{a_t} r_{i_tj}^{a_t} := \bar{r}_{i_t}^{a_t}$ and $\mathbb{E}[S(\hat{\mathbf{Q}}(t))]= \sum_j p_{i_tj}^{a_t}\hat{v}_j(t) := {\mathbf{p}_{i_t}^{a_t}}^\top\hat{\mathbf{v}}(t)$.
Since we assume $r_{ij}^a\in[0,1]$, it is easy to verify $0 \leq \hat{\mathbf{v}}(t) \leq \frac{1}{1-\gamma}$ by induction. We skip the details here. Then letting $K = \big\lceil\frac{8}{(1-\gamma)^4\varepsilon^2}\log\big(\frac{4L}{\delta}\big)\big\rceil$, we can obtain that
\begin{equation*}
\begin{aligned}
    &  \mathbb{P}\Big[\vert r(t) - \bar{r}_{i_t}^{a_t}\vert
                       \geq\dfrac{(1-\gamma)^2\varepsilon}{4}\Big]
\leq   2 e^{ \frac{-2K^2(1-\gamma)^4\varepsilon^2}
        {16K} }
\leq    \dfrac{\delta}{2L};\\
    &  \mathbb{P}\Big[\vert S(\hat{\mathbf{Q}}(t)) - {\mathbf{p}_{i_t}^{a_t}}^\top\hat{\mathbf{v}}(t)\vert
                       \geq\dfrac{(1-\gamma)\varepsilon}{4}\Big]
\leq   2 e^{ \frac{-2K^2(1-\gamma)^4\varepsilon^2}
        {16K} }
\leq    \dfrac{\delta}{2L}.
\end{aligned}
\end{equation*}
\end{proof}

\begin{proof}[Proof of Proposition~\ref{SamCon}]
For a fixed iteration $t$, by Lemma~\ref{SamCon_lem},
\begin{equation*}
\begin{aligned}
        \big|r(t) +\gamma S(\hat{\mathbf{Q}}(t))-\bar{r}_{i_t}^{a_t} -
        \gamma{\mathbf{p}_{i_t}^{a_t}}^\top\hat{\mathbf{v}}(t)\big|
  \leq  \vert r(t)- \bar{r}_{i_t}^{a_t}\vert
        + \gamma \vert S(\hat{\mathbf{Q}}(t)) -
        {\mathbf{p}_{i_t}^{a_t}}^\top\hat{\mathbf{v}}(t)\vert
  \leq  \frac{(1-\gamma)\varepsilon}{4}
\end{aligned}
\end{equation*}
holds with probability at least $1-\frac{\delta}{L}$. Taking a union bound over all $0\leq t\leq L-1$ iterations gives the desired result.
\end{proof}

\begin{proof}[Proof of Proposition~\ref{err1_ana}]
We denote by $\mathcal{E}_1$ the event
\begin{equation*}
\Big\{\big|r(t) +\gamma S(\hat{\mathbf{Q}}(t)) -\bar{r}_{i_t}^{a_t} - \gamma{\mathbf{p}_{i_t}^{a_t}}^\top\hat{\mathbf{v}}(t)\big|\leq\frac{(1-\gamma)\varepsilon}{4},~\forall~0\leq t\leq L-1\Big\}.
\end{equation*}
By Proposition~\ref{SamCon}, $\mathcal{E}_1$ occurs with probability at least $1-\delta$. Next, we condition on $\mathcal{E}_1$ and prove
\begin{equation}\label{eq:QQE}
    \Vert\mathbf{Q}(t)-\mathbf{Q}^{\mathbb{E}}(t)\Vert_\infty
    \leq \frac{\varepsilon}{2},~\forall ~1\leq t\leq L
\end{equation}
by induction. The basic case is trivial.
For the induction step, we analyze the scenario at $t+1$ as two cases. When $t\notin \mathscr{T}^{i,a}$, $|Q_{i, a}(t+1)- Q^{\mathbb{E}}_{i, a}(t+1)|\leq\varepsilon/2$ follows from the hypothesis, since Eqs.~\eqref{apx_Q} and~\eqref{QE} give that
\begin{equation*}
  Q_{i, a}(t+1)- Q^{\mathbb{E}}_{i, a}(t+1)
= Q_{i,a}(t) - Q^{\mathbb{E}}_{i, a}(t).
\end{equation*}
When $t \in \mathscr{T}^{i,a}$, by Eq.~\eqref{apx_Q}, Eq.~\eqref{QE} and triangle inequality, we have that 
\begin{equation*}
\begin{aligned}
 &~     \big|Q_{i, a}(t+1)-Q^{\mathbb{E}}_{i, a}(t+1)\big|\\
=&~     \Big|r(t)+\gamma S(\hat{\mathbf{Q}}(t))  - \dfrac{(1-\gamma)\varepsilon}{4}
        -\bar{r}_i^a   - \gamma \sum_j p_{ij}^{a}
        \max_{a^\prime}
        \hat{Q}^\mathbb{E}_{j,a^\prime}(t)\Big|\\
\leq&~ \Big| r(t) + \gamma S(\hat{\mathbf{Q}}(t))- \bar{r}_i^a
        - \gamma {\mathbf{p}_{i}^{a}}^\top\hat{\mathbf{v}}(t)
        - \dfrac{(1-\gamma)\varepsilon}{4}\Big|
        +\Big|{\gamma\mathbf{p}_{i}^{a}}^\top\hat{\mathbf{v}}(t)
        -\gamma\sum_j p_{ij}^{a}
        \max_{a^\prime} \hat{Q}^\mathbb{E}_{j,a^\prime}(t) \Big\vert\\
\leq&~  \Big| r(t) + \gamma S(\hat{\mathbf{Q}}(t))- \bar{r}_i^a
        - \gamma {\mathbf{p}_{i}^{a}}^\top\hat{\mathbf{v}}(t)\Big|
        + \dfrac{(1-\gamma)\varepsilon}{4}
        + \gamma\sum_j p_{ij}^{a}
        \big\vert \max_{a^\prime} \hat{Q}_{j,a^\prime}(t)
        - \max_{a^\prime} \hat{Q}^\mathbb{E}_{j,a^\prime}(t) \big\vert.
\end{aligned}
\end{equation*}
By definition of $\mathcal{E}_1$ and the induction hypothesis, we further obtain that
\begin{equation*}
        |Q_{i, a}(t+1)- Q^{\mathbb{E}}_{i, a}(t+1)|
\leq    \dfrac{(1-\gamma)\varepsilon}{4} + \dfrac{(1-\gamma)\varepsilon}{4}
        + \gamma\dfrac{\varepsilon}{2}
       = \dfrac{\varepsilon}{2},
\end{equation*}
which completes the proof.
\end{proof}

\begin{proof}[Proof of Theorem~\ref{err_Q}]
By Proposition~\ref{Cov_Rate2},
\begin{equation*}\label{err_Q_eq1}
        \Vert\mathbf{Q}^* - \mathbf{Q}^{\mathbb{E}}(L)\Vert_\infty
\leq    (1-\gamma)^{-1} \rho^{L-2B_1}
=       (1-\gamma)^{-1} \gamma^{\frac{L-2B_1}{B_1+B_2-1}}.
\end{equation*}
Notice that $\gamma = (1-(1-\gamma))\leq e^{-(1-\gamma)}$. We have that
\begin{equation}~\label{err_Q_eq2}
        \Vert\mathbf{Q}^* - \mathbf{Q}^{\mathbb{E}}(L)\Vert_\infty
\leq    (1-\gamma)^{-1}
        e^{-(1-\gamma)\frac{L-2B_1}{B_1+B_2-1}}
\leq    \frac{\varepsilon}{2},
\end{equation}
where the last inequality holds with $L = \big\lceil2B_1+\frac{B_1+B_2-1}{1-\gamma}
\log\big(\frac{2}{(1-\gamma)\varepsilon}\big)\big\rceil $.
Then, by Proposition~\ref{err1_ana}, with probability at least $1-\delta$,
\begin{equation}~\label{err_Q_eq3}
    \Vert\mathbf{Q}^{\mathbb{E}}(L)-\mathbf{Q}(L)\Vert_\infty
    \leq \frac{\varepsilon}{2}.
\end{equation}
Inserting Eq.~\eqref{err_Q_eq3} back into Eq.~\eqref{err_Q_eq2} gives the desired result
\begin{equation*}
        \Vert \mathbf{Q}^* - \mathbf{Q}(L) \Vert_\infty
\leq    \Vert\mathbf{Q}^* - \mathbf{Q}^{\mathbb{E}}(L)  \Vert_\infty
        + \Vert\mathbf{Q}^{\mathbb{E}}(L) - \mathbf{Q}(L)\Vert_\infty
\leq    \varepsilon.
\end{equation*}
Then one can check $\Vert \mathbf{v}^* - \mathbf{v}(L) \Vert_\infty\leq \varepsilon$ at ease.
\end{proof}

\section{Missing Proof of Theorem~\ref{e_policy}}
After $L$ iterations, AsyncQVI returns a policy $\pi(L)$ with $\pi_i(L) = \arg\max_{a\in\mathcal{A}}Q_{i,a}(L)$. To show that $\pi(L)$ is $\varepsilon$-optimal, we first define a policy operator.



\begin{definition}[Policy Operator]
Given a policy $\pi$ and a vector $\mathbf{v}\in \mathbb{R}^{|\mathcal{S}|}$, the policy operator $T_\pi$: $\mathbb{R}^{|\mathcal{S}|}\rightarrow\mathbb{R}^{|\mathcal{S}|}$ is defined as
\begin{equation}~\label{policy_operator}
    [T_\pi\mathbf{v}]_{i}
=   \bar{r}_{i}^{\pi_i}
    +\gamma {\mathbf{p}_{i}^{\pi_i}}^\top \mathbf{v}
=   r_{i}^{\pi_i}
    +\gamma \sum_{j\in\mathcal{S}} p_{ij}^{\pi_i} v_j.
\end{equation}
\end{definition}
\begin{proposition}[$T_{\pi}$'s Properties]~\label{Tpi_property}
Given a policy $\pi$, for any vectors $\mathbf{v}$, $\mathbf{v^\prime}\in\mathbb{R}^{\mathcal{S}}$,
\begin{enumerate}[(a)]
  \item \textbf{Monotonicity: } if $\mathbf{v}\leq\mathbf{v}^\prime$, then $T_{\pi}\mathbf{v}\leq T_{\pi}\mathbf{v}^\prime$.
  \item \textbf{$\gamma$-Contraction:}
      $\Vert T _{\pi}\mathbf{v} - T_{\pi}\mathbf{v}^\prime\Vert_\infty
      \leq \gamma\Vert\mathbf{v}-\mathbf{v}^\prime\Vert_\infty$.
    \item $\mathbf{v}^{\pi}$ is the fixed-point of $T_{\pi}$.
\end{enumerate}
\end{proposition}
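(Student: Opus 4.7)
The plan is to verify each of the three properties directly from the coordinate-wise definition in Eq.~\eqref{policy_operator}, exploiting only the facts that the row $\mathbf{p}_i^{\pi_i}$ is a probability distribution (nonnegative entries summing to one) and that $\mathbf{v}^\pi$ satisfies a Bellman-type recursion arising from the definition of the infinite-horizon discounted return.

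For (a), I would subtract the two expressions coordinate-wise: for each $i$,
\begin{equation*}
  [T_\pi \mathbf{v}^\prime]_i - [T_\pi \mathbf{v}]_i = \gamma \sum_{j\in\mathcal{S}} p_{ij}^{\pi_i}(v^\prime_j - v_j).
\end{equation*}
Since $p_{ij}^{\pi_i}\geq 0$ and $v^\prime_j - v_j \geq 0$ by hypothesis, every summand is nonnegative, so $[T_\pi \mathbf{v}^\prime]_i \geq [T_\pi \mathbf{v}]_i$ for all $i$, which is monotonicity.

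For (b), I would apply the triangle inequality to the same identity, using $\sum_j p_{ij}^{\pi_i} = 1$:
\begin{equation*}
  \bigl|[T_\pi \mathbf{v}]_i - [T_\pi \mathbf{v}^\prime]_i\bigr| \leq \gamma \sum_{j} p_{ij}^{\pi_i}\,|v_j - v^\prime_j| \leq \gamma \|\mathbf{v}-\mathbf{v}^\prime\|_\infty.
\end{equation*}
Taking the maximum over $i$ yields the $\gamma$-contraction in the $\ell_\infty$-norm.

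For (c), I would argue via the one-step decomposition of the return. Fix $i\in\mathcal{S}$ and let $a = \pi_i$. Conditioning on the first transition $s_0 = i, a_0 = a, s_1 = j$ and invoking the Markov property,
\begin{equation*}
  v^\pi_i = \mathbb{E}^\pi\Bigl[\textstyle\sum_{t=0}^\infty \gamma^t r_{s_t s_{t+1}}^{a_t}\,\Big\vert\, s_0 = i\Bigr] = \sum_{j} p_{ij}^{\pi_i}\Bigl(r_{ij}^{\pi_i} + \gamma\,\mathbb{E}^\pi\bigl[\textstyle\sum_{t=0}^\infty \gamma^t r_{s_t s_{t+1}}^{a_t}\,\big\vert\, s_0 = j\bigr]\Bigr),
\end{equation*}
which rearranges to $v^\pi_i = \bar{r}_i^{\pi_i} + \gamma\,{\mathbf{p}_i^{\pi_i}}^\top \mathbf{v}^\pi = [T_\pi \mathbf{v}^\pi]_i$. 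Since this holds coordinate-wise, $\mathbf{v}^\pi$ is a fixed point; uniqueness (not strictly claimed in the proposition, but worth noting) follows from part (b) via Banach's theorem. None of the three parts carries a genuine obstacle: the work is purely to write out the definitions carefully and, for (c), to justify the interchange of summation and expectation, which is standard given boundedness $r_{ij}^a\in[0,1]$ and $\gamma\in(0,1)$ (absolute convergence of the geometric series).
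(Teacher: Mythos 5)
Your proof is correct; the paper itself omits the argument, stating only that it is ``straightforward following the definition,'' and your write-up is exactly the standard definition-chasing the authors had in mind: nonnegativity of the transition probabilities for (a), the triangle inequality plus $\sum_j p_{ij}^{\pi_i}=1$ for (b), and the one-step (Bellman) decomposition of the discounted return for (c). No gaps; your remark on justifying the interchange of expectation and summation via boundedness of rewards is a correct and welcome extra detail.
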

The proof is straightforward following the definition. We skip the details here.
\begin{lemma}\label{monotone}\emph{\citep{sidford2018variance}} Given a policy $\pi$, for any vector $\mathbf{v}\in\mathbb{R}^{\mathcal{S}}$, if there exists a $\mathbf{v^\prime}\in\mathbb{R}^{\mathcal{S}}$ such that $\mathbf{v}^\prime \leq \mathbf{v}$ and  $\mathbf{v}\leq T_{\pi}\mathbf{v}^\prime$, then
$\mathbf{v}\leq\mathbf{v}^\pi.$
\end{lemma}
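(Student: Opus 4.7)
The plan is to exploit the two properties of the policy operator $T_\pi$ collected in Proposition~\ref{Tpi_property} — monotonicity and $\gamma$-contraction with fixed point $\mathbf{v}^\pi$ — and string them together by iteration. The role of the auxiliary vector $\mathbf{v}'$ is only to bootstrap a self-improvement inequality $\mathbf{v}\leq T_\pi\mathbf{v}$; once that is in hand, the rest is monotone convergence.

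First, I would apply Proposition~\ref{Tpi_property}(a) to the hypothesis $\mathbf{v}'\leq \mathbf{v}$ to conclude $T_\pi\mathbf{v}'\leq T_\pi\mathbf{v}$. Chaining this with the second hypothesis $\mathbf{v}\leq T_\pi\mathbf{v}'$ immediately yields the key intermediate inequality $\mathbf{v}\leq T_\pi\mathbf{v}$. Next, I would iterate: because $T_\pi$ is monotone, applying it $k$ times to $\mathbf{v}\leq T_\pi\mathbf{v}$ and composing gives, by an easy induction on $k\geq 0$,
\begin{equation*}
\mathbf{v}\;\leq\; T_\pi\mathbf{v}\;\leq\; T_\pi^{2}\mathbf{v}\;\leq\;\cdots\;\leq\; T_\pi^{k}\mathbf{v}.
\end{equation*}

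Finally, I would pass to the limit $k\to\infty$. By Proposition~\ref{Tpi_property}(b), $T_\pi$ is a $\gamma$-contraction under $\ell_\infty$, and by Proposition~\ref{Tpi_property}(c) its unique fixed point is $\mathbf{v}^\pi$; therefore $\Vert T_\pi^{k}\mathbf{v}-\mathbf{v}^\pi\Vert_\infty\leq \gamma^{k}\Vert \mathbf{v}-\mathbf{v}^\pi\Vert_\infty\to 0$, so $T_\pi^{k}\mathbf{v}\to \mathbf{v}^\pi$ component-wise. Since component-wise limits preserve weak inequalities, letting $k\to\infty$ in $\mathbf{v}\leq T_\pi^{k}\mathbf{v}$ yields $\mathbf{v}\leq \mathbf{v}^\pi$, which is the desired conclusion.

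There is no real obstacle here: the argument is a standard fixed-point-theoretic ``sub-solutions lie below the solution'' fact, and every ingredient I need is already packaged in Proposition~\ref{Tpi_property}. The only minor care required is to observe that the hypothesis $\mathbf{v}'\leq \mathbf{v}$ is used exactly once, at the very first step, purely to convert the given inequality $\mathbf{v}\leq T_\pi\mathbf{v}'$ into the self-improvement inequality $\mathbf{v}\leq T_\pi\mathbf{v}$; after that, $\mathbf{v}'$ disappears from the argument entirely.
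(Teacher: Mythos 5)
Your argument is correct and is essentially identical to the paper's own proof: both derive $\mathbf{v}\leq T_\pi\mathbf{v}$ from monotonicity and the two hypotheses, iterate to get $\mathbf{v}\leq T_\pi^{k}\mathbf{v}$, and then pass to the limit using the $\gamma$-contraction and the fixed point $\mathbf{v}^\pi$. No gaps.
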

\begin{proof}
By Proposition~\ref{Tpi_property} (a) and $\mathbf{v}^\prime\leq\mathbf{v}$, we first have $T_{\pi}\mathbf{v}^\prime \leq T_{\pi}\mathbf{v}$. Combining with $\mathbf{v}\leq T_{\pi}\mathbf{v}'$, we further obtain
$\mathbf{v}\leq T_{\pi}\mathbf{v}$.
By induction, one can check $\mathbf{v}\leq T^n_{\pi}\mathbf{v}$, $\forall~n\in\mathbb{N}$.
Moreover, since $T_\pi$ is a $\gamma$-contraction,
$\mathbf{v}^\pi = \lim_{n\rightarrow \infty} T_\pi^n \mathbf{v}$.
Hence, $\mathbf{v}\leq\lim_{n\rightarrow \infty} T_\pi^n \mathbf{v}=\mathbf{v}^\pi$.
\end{proof}

Next, we consider the special case that $\mathbf{v}(L)$ and $\pi(L)$ are both derived from AsyncQVI with
\begin{equation*}
   \pi_i(L) = \arg\max_{a} Q_{i,a}(L),\quad v_{i}(L) = \max_{a} Q_{i,a}(L), ~\forall i\in \mathcal{S}.
\end{equation*}
If $\|\mathbf{v}^* - \mathbf{v}^{\pi}\|_{\infty}\leq \varepsilon$, then $\pi$ is $\varepsilon-$optimal. To achieve this, we first show that $\mathbf{v}(L)$ satisfies Lemma \ref{monotone} (see Lemma \ref{AsyncQVI_monotone}). Then with Theorem~\ref{err_Q}, $\|\mathbf{v}^*-\mathbf{v}^{\pi}\|_{\infty}\leq\|\mathbf{v}^*-\mathbf{v}(L)\|_{\infty}\leq \varepsilon$.


\begin{lemma}\label{AsyncQVI_monotone}
Under Assumption~\ref{assump}, AsyncQVI generates a sequence of $\{\mathbf{v}(t)\}_{t=1}^{L}$ and $\{\pi(t)\}_{t=1}^{L}$ satisfying
\begin{equation}\label{e_optimal_policy_eq1}
  \mathbf{v}(t-1) \leq \mathbf{v}(t) \leq T_{\pi(t)}\mathbf{v}(t-1),
  ~\forall~1\leq t\leq L
\end{equation}
with probability at least $1-\delta$.
\end{lemma}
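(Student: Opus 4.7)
The plan is to prove both inequalities simultaneously by induction on $t$, conditioned on the sample-concentration event $\mathcal{E}_1$ from Proposition~\ref{SamCon}, which holds with probability at least $1-\delta$. Throughout, I would use the fact that the snapshot $\hat{\mathbf{v}}(t)$ was copied from the shared memory at some earlier global counter value $\tau_j(t)\le t-1$, so that $\hat{v}_j(t)=v_j(\tau_j(t))$; once the left-hand monotonicity inequality is in hand, this immediately yields the delay-monotonicity bound $\hat{\mathbf{v}}(t)\le\mathbf{v}(t-1)$, which is the crucial bridge in the argument.

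First I would dispatch the left-hand inequality $\mathbf{v}(t-1)\le\mathbf{v}(t)$: by Lines~\ref{alg_if}--\ref{alg_unlock} of Algorithm~\ref{AsyncQVI}, the coordinate $v_{i_t}$ is overwritten only when $q>v_{i_t}$, and all other coordinates are untouched, so the sequence is componentwise nondecreasing. This needs no induction beyond its own one-step verification and legitimizes the bound $\hat{v}_j(t)=v_j(\tau_j(t))\le v_j(t-1)$.

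For the right-hand inequality $\mathbf{v}(t)\le T_{\pi(t)}\mathbf{v}(t-1)$, I would split by coordinate. If $i=i_t$ and the update fires, then $v_{i_t}(t)=q=\tfrac{1}{K}\sum_k r_k+\gamma\tfrac{1}{K}\sum_k\hat{v}_{s'_k}(t)-\tfrac{(1-\gamma)\varepsilon}{4}$ with $\pi_{i_t}(t)=a_t$. By Proposition~\ref{SamCon}, the first two terms are at most $\bar{r}_{i_t}^{a_t}+\gamma(\mathbf{p}_{i_t}^{a_t})^\top\hat{\mathbf{v}}(t)+\tfrac{(1-\gamma)\varepsilon}{4}$, so the subtracted slack $-\tfrac{(1-\gamma)\varepsilon}{4}$ exactly cancels it, giving $q\le\bar{r}_{i_t}^{a_t}+\gamma(\mathbf{p}_{i_t}^{a_t})^\top\hat{\mathbf{v}}(t)$. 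Combining with $\hat{\mathbf{v}}(t)\le\mathbf{v}(t-1)$ and the nonnegativity of $\mathbf{p}_{i_t}^{a_t}$ produces $q\le\bar{r}_{i_t}^{a_t}+\gamma(\mathbf{p}_{i_t}^{a_t})^\top\mathbf{v}(t-1)=[T_{\pi(t)}\mathbf{v}(t-1)]_{i_t}$, as required. If instead the update at $i_t$ does not fire, or if $i\ne i_t$, then $v_i(t)=v_i(t-1)$ and $\pi_i(t)=\pi_i(t-1)$; by the inductive hypothesis, $v_i(t-1)\le[T_{\pi(t-1)}\mathbf{v}(t-2)]_i$, and by the monotonicity of $T_{\pi(t-1)}$ (Proposition~\ref{Tpi_property}(a)) together with $\mathbf{v}(t-2)\le\mathbf{v}(t-1)$ (already established), this is at most $[T_{\pi(t-1)}\mathbf{v}(t-1)]_i=[T_{\pi(t)}\mathbf{v}(t-1)]_i$. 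The base case $t=1$ is the same calculation with $\mathbf{v}(0)=\hat{\mathbf{v}}(0)=\mathbf{0}$ substituted in.

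The only genuinely delicate point is the interplay in Case~1 among three sources of error: the Hoeffding one-sided sampling gap, the engineered shift $-(1-\gamma)\varepsilon/4$, and the staleness gap $\hat{\mathbf{v}}(t)\le\mathbf{v}(t-1)$. Making these cancel in the correct direction requires the monotonicity of $\mathbf{v}(t)$ to be proved \emph{simultaneously} with the $T_{\pi(t)}$ inequality, rather than in sequence, so I would carry both statements through the same induction. Everything else is bookkeeping of async indices and unfolding of definitions.
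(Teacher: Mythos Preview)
Your proposal is correct and follows essentially the same route as the paper: condition on the concentration event from Proposition~\ref{SamCon}, use the algorithmic guard $q>v_{i_t}$ to get monotonicity of $\mathbf{v}(t)$, and then prove $\mathbf{v}(t)\le T_{\pi(t)}\mathbf{v}(t-1)$ by induction, handling the updated coordinate via the concentration bound plus $\hat{\mathbf{v}}\le\mathbf{v}(t-1)$ and the non-updated coordinates via the induction hypothesis and monotonicity of $T_\pi$. Two small remarks: (i) your first paragraph correctly observes that monotonicity is a one-step algorithmic fact requiring no induction, so your later assertion that the two inequalities \emph{must} be carried through the same induction is stronger than needed---monotonicity can safely be established first and then consumed in the induction for the right-hand inequality, exactly as the paper does; (ii) be careful with the delay indexing: the paper's convention allows $\tau_j(t)\le t$ rather than $\tau_j(t)\le t-1$, but since the paper phrases the induction as going from $t'$ to $t'+1$ and uses $\hat{\mathbf{v}}(t')\le\mathbf{v}(t')$, this amounts to the same bound after your index shift.
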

\begin{proof}
By Proposition~\ref{SamCon},
\begin{equation*}
  \big\vert r(t) + \gamma S(\hat{\mathbf{Q}}(t))- \bar{r}_{i_t}^{a_t} - \gamma {\mathbf{p}_{i_t}^{a_t}}^\top\hat{\mathbf{v}}(t)\big\vert
  \leq\frac{(1-\gamma)\varepsilon}{4},~\forall~0\leq t\leq L-1
\end{equation*}
holds with probability at least $1-\delta$.
Denote by $\mathcal{E}_2$ the event
\begin{equation*}
\Big\{
  r(t) + \gamma S(\hat{\mathbf{Q}}(t))-\frac{(1-\gamma)\varepsilon}{4}\leq
  \bar{r}_{i_t}^{a_t} + \gamma{\mathbf{p}_{i_t}^{a_t}}^\top \hat{\mathbf{v}}(t),~\forall~0\leq t\leq L-1\Big\}.
\end{equation*}
Then $\mathcal{E}_2$ occurs with probability at least $1-\delta$.

Now we condition on $\mathcal{E}_2$ and prove Eq.~\eqref{e_optimal_policy_eq1} by induction. For simplicity, we let $\mathbf{v}(-1)=\mathbf{v}(0)=\mathbf{0}$ and start our proof from $t=0$. Then the basic case holds. For the induction step, suppose that Eq.~\eqref{e_optimal_policy_eq1} is true for all $t$ up to some $t^\prime$.
Recall that in AsyncQVI, for each iteration, whether $v_i$ or $\pi_i$ will be updates depends on the value of $Q_{i,a}$. We hence analyze the scenario at $(t'+1)$ as two cases.

\noindent \textbf{Case 1:}
  $Q_{i_{t^\prime},a_{t^\prime}}(t^\prime+1) \leq v_{i_{t^\prime}}(t^\prime)$. Then $\mathbf{v}$ and $\pi$ will not be updated, i.e., $\mathbf{v}(t^\prime+1) = \mathbf{v}(t^\prime)$ and  $\pi(t^\prime+1)= \pi(t^\prime)$. In this case, the inequality $\mathbf{v}(t^\prime) \leq \mathbf{v}(t^\prime+1)$ follows directly. For the other part, by induction hypothesis we have
  \begin{equation*}
    \mathbf{v}(t^\prime+1)
    =       \mathbf{v}(t^\prime)
    \leq    T_{\pi(t^\prime)}\mathbf{v}(t^\prime-1)
    =      T_{\pi(t^\prime+1)}\mathbf{v}(t^\prime-1)
    \leq    T_{\pi(t^\prime+1)}\mathbf{v}(t^\prime),
  \end{equation*}
where the last inequality comes from $\mathbf{v}(t^\prime-1)\leq \mathbf{v}(t^\prime)$ and the monotonicity of $T_{\pi(t^\prime+1)}$.

\noindent \textbf{Case 2:}
  $Q_{i_{t^\prime},a_{t^\prime}}(t^\prime+1)> v_{i_{t^\prime}}(t^\prime)$. Then $\forall~i\in\mathcal{S}$,
\begin{description}
  \item[\quad Case 2.1: ]
  $i\neq i_{t^\prime}$. In this case, $v_{i}(t^\prime+1) = v_i(t^\prime)$ and $\pi_{i}(t^\prime+1)= \pi_{i}(t^\prime)$. Hence, once again by induction hypothesis and $T_\pi$'s monotonicity, we obtain
  \begin{equation*}
            v_{i}(t^\prime+1)
    =       v_{i}(t^\prime)
    \leq    \big[T_{\pi(t^\prime)}\mathbf{v}(t^\prime-1)\big]_{i}
    =       \big[T_{\pi(t^\prime+1)}\mathbf{v}(t^\prime-1)\big]_{i}
    \leq    \big[T_{\pi(t^\prime+1)}\mathbf{v}(t^\prime)\big]_{i}.
  \end{equation*}
  \item[\quad Case 2.2: ]
  $i=i_{t^\prime}$. According to Lines~\ref{alg_if} and~\ref{alg_pi} of Algorithm~\ref{AsyncQVI}, the $i$th coordinate of $\mathbf{v}$ is updated at iteration $t^\prime$ and the former inequality follows directly. For the latter inequality, by Line~\ref{alg_q} of Algorithm~\ref{AsyncQVI} we have
    \begin{equation*}
            v_{i}(t^\prime+1)
    =       Q_{i,a_{t^\prime}}(t^\prime+1)
    =       r(t^\prime) + \gamma S(\hat{\mathbf{Q}}(t^\prime))
            - \frac{(1-\gamma)\varepsilon}{4}.
    \end{equation*}
  By definition of $\mathcal{E}_2$ and $\pi_i(t^\prime+1) = a_{t^\prime}$, we obtain
  \begin{equation*}
            v_{i}(t^\prime+1)
    \leq    \bar{r}_{i}^{a_{t^\prime}}
            + \gamma {\mathbf{p}_{i}^{a_{t^\prime}}}^\top\hat{\mathbf{v}}(t^\prime)
    =       \big[T_{\pi(t^\prime+1)}
            \hat{\mathbf{v}}(t^\prime)\big]_{i}.
  \end{equation*}
Owing to $\hat{\mathbf{v}}(t^\prime)\leq \mathbf{v}(t^\prime)$ by induction hypothesis and the monotonicity of $T_{\pi(t^\prime+1)}$, we can complete our proof by
  \begin{equation*}
            v_{i}(t^\prime+1)
    \leq    \big[T_{\pi(t^\prime+1)}
            \hat{\mathbf{v}}(t^\prime)\big]_{i}
    \leq    \big[T_{\pi(t^\prime+1)}{\mathbf{v}}(t^\prime)\big]_{i}.
  \end{equation*}
\end{description}
\end{proof}

Finally, combining the results of Lemma \ref{monotone}, Lemma~\ref{AsyncQVI_monotone} and Theorem~\ref{err_Q}, we can establish Theorem~\ref{e_policy} at ease.
\end{appendices}

\end{document}